\newcommand{\shrinkmargins}[1]{
  \addtolength{\textheight}{#1\topmargin}
  \addtolength{\textheight}{#1\topmargin}
  \addtolength{\textwidth}{#1\oddsidemargin}
  \addtolength{\textwidth}{#1\evensidemargin}
  \addtolength{\topmargin}{-#1\topmargin}
  \addtolength{\oddsidemargin}{-#1\oddsidemargin}
 \addtolength{\evensidemargin}{-#1\evensidemargin}
  }
\theoremstyle{plain}
\newtheorem{theorem}{Theorem}[section]
\newtheorem{corollary}[theorem]{Corollary}
\newtheorem{lemma}[theorem]{Lemma}
\newtheorem{proposition}[theorem]{Proposition}
\newtheorem*{teo}{Theorem}
\newtheorem{definition}[theorem]{Definition}
\theoremstyle{remark}
\newtheorem{remark}[theorem]{Remark}
\theoremstyle{definition}
\newtheorem{example}[theorem]{Example}
\def \Z { \mathbb{Z}}
\def \Q { \mathbb{Q}}
\def \Gal { \text{Gal}}
\def \ker { \text{Ker}}
\def \R { \mathbb{R}}
\def \tr { {\rm Tr}}
\def \Tr { {\rm Tr}}
\newcommand{\oo}{{\mathfrak{o}}}
\newcommand{\QQ}{{\mathbb{Q}}}
\newcommand{\ZZ}{{\mathbb{Z}}}
\DeclareMathOperator{\lcm}{lcm}
\begin{document}

\thispagestyle{empty}
\setcounter{tocdepth}{7}

\title{The Shape of cyclic number fields}
\author{Wilmar Bola\~nos \and Guillermo Mantilla-Soler}

\date{}

\maketitle

\begin{abstract}

Let $m>1$  and $\mathfrak{d} \neq 0$ be  integers such that $v_{p}(\mathfrak{d}) \neq m$ for any prime $p$. We construct a matrix $A(\mathfrak{d})$  of size $(m-1) \times (m-1)$ depending on only of $\mathfrak{d}$  with the following property: For any tame $\Z/m\Z$-number field $K$ of discriminant $\mathfrak{d}$ the matrix $A(\mathfrak{d})$ represents the Gram matrix of the integral trace zero form of $K$. In particular, we have that the integral trace zero form of tame cyclic number fields is determined by the degree and discriminant of the field. Furthermore, if in addition to the above hypotheses, we consider real number fields, then the shape is also determined by the degree and the discriminant.\\ 
\end{abstract}

\section{Introduction}

Let $K$ be a number field of degree $n:=[K:\mathbb{Q}]$  and let $\oo_{K}$ be its maximal order. The {\it trace zero module} of $\oo_{K}$ is the $\Z$-submodule of $\oo_{K}$ given by the Kernel of the trace map i.e., $\oo_{K}^{0}=K^{0} \cap \oo_{K}$ where $K^{0}:= \{ x \in K : \tr_{K/\mathbb{Q}}(x)=0 \}.$ The {\it integral trace-zero form} of $K$ is the isometry class of the rank $n-1$ quadratic $\Z$-module $\langle \oo_{K}^{0}, {\rm Tr}_{K/\Q} \rangle$ given by restricting the trace pairing from $\oo_{K} \times \oo_{K}$  to $\oo_{K}^{0} \times \oo_{K}^{0}.$  For $K$ of degree $n=1,2$ it is clear, by checking the discriminant, that the isometry class of the quadratic module $\langle \oo_K^{0} , \Tr_{K / \QQ} ()|_{\oo_K}\rangle$ determines the field $K$. For general degrees this is not the case, see for instance \cite[\S 3]{Man}. However, recently the second named author and one of his coauthors have shown that for square free discriminants, with some extra conditions on the signature and degree, the integral trace-zero form determines the conjugacy class of the field:

\begin{theorem}\cite[Theorem 2.13]{M4}. Suppose that $K$ is totally real, of fundamental discriminant $d$ and such that $\gcd(n,d)=1$. If $(\Z/n\Z)^{*}$ is cyclic then $\left<\oo_K^{0} , \Tr_{K / \QQ} ()|_{\oo_K}\right>$  is a complete invariant for $K$. In other words, for any number field  $L$ we have that \[ \left<\oo_K^{0} , \Tr_{K / \QQ} ()|_{\oo_K}\right> \simeq  \left<\oo_L^{0} , \Tr_{L / \QQ} ()|_{\oo_L} \right>  \  \mbox{if and only if} \ K \simeq L.  \]

\end{theorem}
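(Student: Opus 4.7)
The plan is to reduce the claim to the analogous statement for the full integral trace form $\langle \oo_K, \Tr_{K/\QQ} \rangle$, which under the stated hypotheses is known to be a complete invariant of $K$. The task then becomes showing that the isometry class of $\langle \oo_K^0, \Tr\rangle$ determines that of $\langle \oo_K, \Tr\rangle$.

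First I would extract the degree $n$ from the rank $n-1$ of the trace zero form. Next, the elementary identity $n \cdot \operatorname{disc}\langle \oo_K^0, \Tr \rangle = e^2 d$, where $e := [\oo_K : \ZZ \cdot 1 + \oo_K^0] = n/m$ with $m\ZZ = \Tr(\oo_K)$, bounds the ambiguity in recovering $d$: the discriminant of the trace zero form together with $n$ constrains $d$ to a short list of possibilities, and the hypotheses that $d$ is fundamental and $\gcd(n,d)=1$ should leave only one option.

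With $n$ and $d$ in hand, I would complete the argument by a local-to-global analysis of the trace form. At primes $p \nmid nd$ the local integral trace form is unimodular and essentially uniquely split; at primes $p \mid d$ ramification is tame (since $p \nmid n$) and the local trace form records the ramification character of $K_p/\QQ_p$; at primes $p \mid n$ the hypothesis $p \nmid d$ makes the local extension unramified, so the local trace form is again unimodular. The cyclicity of $(\ZZ/n\ZZ)^*$ enters by forcing the tamely ramified local extensions at each $p \mid d$ to be classified by a character into $(\ZZ/n\ZZ)^*$ via inertia, so that the local data encoded in $\langle \oo_K^0, \Tr\rangle$ reconstructs $\langle \oo_K, \Tr\rangle_p$ at every place. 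Combining the local identifications with the rigidity of the genus for the relevant positive-definite $\ZZ$-lattice then determines $\langle \oo_K, \Tr\rangle$ globally, after which the complete-invariant property of the integral trace form concludes the argument.

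The main obstacle I anticipate lies in the local analysis at primes $p \mid n$, where the decomposition $\oo_K = \ZZ \cdot 1 \oplus \oo_K^0$ may fail integrally, as the example $K = \QQ(\sqrt{5})$ with $n = 2$ and $e = 2$ already illustrates. One must show that the trace zero form alone still pins down both the index $e$ and the local structure of $\oo_K \otimes \ZZ_p$ at such primes; this is where the hypotheses $\gcd(n,d) = 1$ and $d$ fundamental are essential, but extracting the exact isometry class (and not merely the genus) of $\langle \oo_K, \Tr\rangle$ from the trace zero form requires careful bookkeeping of the local invariants.
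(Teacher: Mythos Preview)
This theorem is not proved in the present paper: it is quoted verbatim from \cite[Theorem~2.12]{M4} in the introduction as background, and the paper supplies no argument of its own for it. There is therefore no proof here to compare your proposal against.

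That said, your plan has a structural gap. The entire reduction rests on the claim that the \emph{full} integral trace form $\langle \oo_K, \Tr_{K/\QQ}\rangle$ is already known to be a complete invariant of $K$ under the stated hypotheses. You treat this as established, but you give no reference, and it is not a standard fact; indeed, $K$ is not assumed Galois (let alone abelian), so even the local picture is more complicated than you suggest --- a prime $p$ can split into several primes of $K$ with different residue degrees, and there is no single ``ramification character of $K_p/\QQ_p$'' to record. The cited paper \cite{M4} is precisely about establishing that the trace-\emph{zero} form is a complete invariant, so presuming the full trace form already does the job and then reducing to it inverts the actual logical order. If the full trace form were known to be a complete invariant in this generality, the trace-zero statement would indeed follow relatively easily along the lines you sketch; but that hypothesis is doing essentially all the work, and it is at least as deep as the theorem you are trying to prove.
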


In contrast to the above result in the case of Galois number fields of  prime degree $\ell =n$ the integral trace-zero form does not discriminante beyond the discriminant. Moreover, in such a case, the integral trace-zero form is isometric to the root lattice $\mathbb{A}_{\ell-1}$ times a constant defined solely in terms of the discriminant.  Recall that for a given positive integer $m$ the root lattice $\mathbb{A}_{m}$ is the $m$-dimensional lattice associated to the quadratic form \[\sum_{1 \leq i  \leq m} 2x_{i}^{2} - \sum_{\substack{1\leq i,j \leq m \\ |i-j| = 1}} x_{i} x_{j} \] or equivalently with Gram matrix in some basis given by \[ \mathcal{A}_{m}:=\begin{pmatrix} 2 & - 1 & 0 & 0 & \dots & 0 \\ - 1 & 2 & -1 & 0 & \cdots & 0 \\ 0 & -1 & 2 & -1 & \dots & 0 \\ \vdots & \vdots & \ddots & \ddots &  \ddots & \vdots \\ 0 & 0 & 0 & -1 & 2 & -1 \\ 0 & 0 & 0 & 0 & -1 & 2  \end{pmatrix}\]

 Given an integer $d$ let ${\rm rad}(d)$ be the square free integer that has the same signature and same prime factors as $d$.

\begin{theorem}\cite[Theorem 2.9]{M3}. Let $\ell$ be a prime and let $K$ be a $\Z/\ell\Z$-number field of discriminant $\mathfrak{d}(K)$. Suppose that $\gcd(\ell, \mathfrak{d}(K))=1$. Then, the Gram matrix of $\left<\oo_K^{0} , \Tr_{K / \QQ} ()|_{\oo_K}\right>$ with respect to some basis  is equal to  \[{\rm rad}(\mathfrak{d}(K)) \mathcal{A}_{\ell -1}.\] In particular,  if $L$ is a $\Z/\ell\Z$-number field with $\gcd(\ell, \mathfrak{d}(L))=1$ then  \[\left<\oo_K^{0} , \Tr_{K / \QQ} ()|_{\oo_K}\right> \simeq \left<\oo_L^{0} , \Tr_{L / \QQ} ()|_{\oo_L}\right> \ \mbox{if and only if} \   \mathfrak{d}(K)=  \mathfrak{d}(L). \]

\end{theorem}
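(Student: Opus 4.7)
The plan is a local--global analysis of the $\Z$-quadratic lattice $(\oo_K^0,\Tr)$. The tameness hypothesis $\gcd(\ell,\mathfrak{d}(K))=1$ and the primality of $[K:\Q]=\ell$ force each ramified prime $p$ to be totally tamely ramified of ramification index $\ell$; since $K\otimes\Q_p/\Q_p$ is then cyclic of degree $\ell$, local class field theory gives $p\equiv 1\pmod\ell$. Consequently $\mathfrak{d}(K)=\pm\prod_{p\in S}p^{\ell-1}$ and ${\rm rad}(\mathfrak{d}(K))=\pm\prod_{p\in S}p$, where $S$ is the ramification set. A cyclic number field of odd prime degree is totally real (complex conjugation has order dividing $2$ and cannot sit in a cyclic group of odd prime order), and the case $\ell=2$ is handled by hand; in either case $(\oo_K^0,\Tr)$ and ${\rm rad}(\mathfrak{d}(K))\mathcal{A}_{\ell-1}$ are positive definite of rank $\ell-1$ with matching signature.

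The core step is to match the two lattices' completions at every prime. For $p\notin S$ with $p\neq\ell$, since $\ell\in\Z_p^\times$ the identity $\Tr(1\cdot x)=\Tr(x)$ gives an orthogonal $\Z_p$-splitting $\oo_K\otimes\Z_p=\Z_p\cdot 1\perp(\oo_K^0\otimes\Z_p)$ of the $\Z_p$-unimodular trace form, and a discriminant comparison modulo squares identifies $(\oo_K^0,\Tr)\otimes\Z_p\simeq\mathcal{A}_{\ell-1}\otimes\Z_p$. The case $p=\ell$ is handled similarly, using $K\otimes\Q_\ell=\Q_\ell^\ell$ in the split case (where the trace-zero sublattice of the standard diagonal form is literally $A_{\ell-1}$) or a direct Jordan-decomposition argument if inert. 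For $p\in S$, write $K\otimes\Q_p=\Q_p(\pi)$ with $\pi^\ell=up$ for some $u\in\Z_p^\times$; using $\Tr(\pi^k)=0$ whenever $\ell\nmid k$, the elements $\pi,\pi^2,\ldots,\pi^{\ell-1}$ form a $\Z_p$-basis of $\oo_K^0\otimes\Z_p$ whose Gram matrix is anti-diagonal with entries $\ell up\in p\Z_p^\times$, and a $GL_{\ell-1}(\Z_p)$-change of basis (available thanks to $\mu_\ell\subset\Z_p^\times$, guaranteed by $p\equiv 1\pmod\ell$) recasts this form as $p\mathcal{A}_{\ell-1}\otimes\Z_p$.

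With local isometries and signatures matched, the global isometry can be obtained either from a single-class-in-genus argument for ${\rm rad}(\mathfrak{d}(K))\mathcal{A}_{\ell-1}$ or, more constructively, via Kronecker--Weber: embedding $K\subset\Q(\zeta_m)$ for $m=|{\rm rad}(\mathfrak{d}(K))|$, taking Gauss periods $\eta_0,\ldots,\eta_{\ell-1}$, and checking that the differences $v_i=\eta_{i-1}-\eta_i$ provide an explicit $\Z$-basis of $\oo_K^0$ realizing the Gram matrix ${\rm rad}(\mathfrak{d}(K))\mathcal{A}_{\ell-1}$ through the classical cyclotomic-number identities for $c_k=\Tr(\eta_0\eta_k)$. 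I expect the principal technical obstacle to be the ramified local isometry: verifying that the anti-diagonal form with entries $\ell up$ is $GL_{\ell-1}(\Z_p)$-equivalent to $p(2\sum x_i^2-\sum_{|i-j|=1}x_ix_j)$ requires matching both $\Z_p$-discriminants and Hasse invariants, and both matches hinge crucially on the $\ell$-th power residue structure of $\Z_p^\times$ supplied by $p\equiv 1\pmod\ell$.
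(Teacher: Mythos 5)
Your local analysis is mostly sound (the reduction to totally real fields, the forced congruence $p\equiv 1\pmod{\ell}$ at ramified primes, the anti-diagonal Gram matrix on $\langle\pi,\dots,\pi^{\ell-1}\rangle$, and the discriminant match via quadratic reciprocity all check out), but the plan does not close at the globalization step. Showing that $\left<\oo_K^{0},\Tr\right>$ and ${\rm rad}(\mathfrak{d}(K))\mathcal{A}_{\ell-1}$ lie in the same genus is not enough: these are positive definite lattices, so there is no Eichler/strong-approximation theorem available, and the genus of ${\rm rad}(\mathfrak{d})\mathcal{A}_{\ell-1}$ has the same class number as that of $\mathcal{A}_{\ell-1}$, which exceeds $1$ once $\ell$ is at all large (the Smith--Minkowski--Siegel mass grows rapidly with the rank). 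So the ``single-class-in-genus argument'' is simply false for general $\ell$, and your fallback --- Gauss periods and ``the classical cyclotomic-number identities for $c_k=\Tr(\eta_0\eta_k)$'' --- is not a fallback but the entire proof: those identities are exactly the content that must be established, and once you have them the whole local apparatus is superfluous. There are also two smaller local gaps: at $p=2$ (which is always unramified here) a unimodular $\Z_2$-lattice is not determined by its discriminant modulo squares, so comparing $\oo_K^{0}\otimes\Z_2$ with $N\mathcal{A}_{\ell-1}\otimes\Z_2$ for $N={\rm rad}(\mathfrak{d})$ odd needs the parity/oddity invariants as well; and the inert case at $p=\ell$ is only named, not carried out.

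For comparison, the paper's route is your ``constructive'' alternative made precise. By Leopoldt's theorem a tame abelian field has a normal integral basis, realized by the Gauss periods $\eta_i$ of conductor ${\rm rad}(\mathfrak{d})$; the cited results of \cite{MB} compute $\Tr(\eta_i\eta_j)$ explicitly, giving the Gram matrix $\sum_{d\mid\ell}a_dA_d=a_1{\rm I}_\ell+a_\ell A_\ell$ with $a_1={\rm rad}(\mathfrak{d})$. Lemma \ref{0basis} shows the differences $\eta_{i}-\eta_{i+1}$ form a basis of $\oo_K^{0}$, Proposition \ref{MatrizRestCero} converts the Gram matrix by conjugating with $\mathbb{B}_\ell$ and deleting a row and column, and the key observation $\mathbb{B}_\ell^TA_\ell\mathbb{B}_\ell=0$ kills the $a_\ell$ term, leaving $a_1(\mathbb{B}_\ell^T\mathbb{B}_\ell)_{(1,1)}={\rm rad}(\mathfrak{d})\mathcal{A}_{\ell-1}$. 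If you want to salvage your plan, either carry out that period computation in full, or replace the genus argument by an explicit global basis; as written the proof is incomplete.
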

\begin{remark}
Note that the condition $\gcd(\ell, \mathfrak{d}(K))=1$ is equivalent to say that the field $K$ is tame, i.e., that there is no rational prime that is wildly ramified in $K$.
\end{remark}

The purpose of this paper is to generalize the result above to general cyclic number fields of arbitrary degree. Our main theorem is the following:

\begin{teo}[cf. Theorem \ref{main}]

Let $m \neq 1$ be a positive integer and let $K$ be a $\Z/m\Z$-number field of discriminant $\mathfrak{d}(K)$. Suppose that $K$ is tame. There exists a matrix $A(\mathfrak{d}(K)) \in {\rm M}_{(m-1) \times (m-1)}(\Z)$ depending only on  $\mathfrak{d}$ such that the Gram matrix of \[\left<\oo_K^{0} , \Tr_{K / \QQ} ()|_{\oo_K}\right>\] with respect to some basis  is equal to \[A(\mathfrak{d}(K)).\] In particular,  if $L$ is a tame $\Z/m\Z$-number field then  \[\left<\oo_K^{0} , \Tr_{K / \QQ} ()|_{\oo_K}\right> \simeq \left<\oo_L^{0} , \Tr_{L / \QQ} ()|_{\oo_L}\right> \ \mbox{if and only if} \   \mathfrak{d}(K)=  \mathfrak{d}(L). \]

\end{teo}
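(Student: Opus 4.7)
The strategy is to exhibit the Gram matrix of the trace form explicitly via a normal integral basis and then show that it only sees the discriminant. Since $K/\Q$ is tame, every character of $G := \Gal(K/\Q) = \langle\sigma\rangle$ is tamely ramified at every prime, so the conductor $N$ of $K$ is squarefree. By Hilbert--Speiser, $\oo_K$ admits a normal integral basis, which we take to be the Gauss period
\[ \theta \;:=\; \Tr_{\Q(\zeta_N)/K}(\zeta_N) \;=\; \sum_{h \in H}\zeta_N^h, \qquad H = \Gal(\Q(\zeta_N)/K) \subset (\Z/N\Z)^*. \]
In the ordered basis $\{\sigma^i\theta\}_{i=0}^{m-1}$ of $\oo_K$, the trace form has circulant Gram matrix $C = (c_{j-i})_{i,j}$ with
\[ c_k \;=\; \Tr_{K/\Q}(\theta\cdot\sigma^k\theta) \;=\; \Tr_{\Q(\zeta_N)/\Q}(\zeta_N\cdot\sigma^k\theta). \]
Under the $\Z[G]$-module isomorphism $\oo_K \simeq \Z[G]$ sending $\sigma^i\theta \mapsto \sigma^i$, the trace zero module $\oo_K^0$ corresponds to the augmentation ideal $I_G$, with $\Z$-basis $\{\sigma^i\theta - \sigma^{i-1}\theta : 1\le i\le m-1\}$, and its Gram matrix is a fixed $\Z$-linear transform of $C$.

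The core computation is the evaluation of the circulant entries $c_k$. Writing $\zeta_N\cdot \sigma^k\theta = \sum_{h\in H}\zeta_N^{1 + g_k h}$ for any lift $g_k \in (\Z/N\Z)^*$ of $\sigma^k$, and using that $\Tr_{\Q(\zeta_N)/\Q}(\zeta_N^a)$ equals the Ramanujan sum $c_N(a)$, one obtains
\[ c_k \;=\; \sum_{h \in H} c_N(1 + g_k h). \]
Squarefreeness of $N$ makes $c_N$ factor as $\prod_{p\mid N} c_p$, and the resulting sum decomposes, prime by prime, into contributions that depend only on the image $G_p \subset G$ of $(\Z/p\Z)^*$ under the quotient $(\Z/N\Z)^* \twoheadrightarrow G$. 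This image has order equal to the inertia index $e_p$. By the conductor--discriminant formula, the tame discriminant valuation is $v_p(\mathfrak{d}(K)) = (m/e_p)(e_p - 1)$, so $e_p$ is recovered from $v_p(\mathfrak{d}(K))$ and $m$. Hence each $c_k$ is a function of $(\mathfrak{d}(K), m, k)$ alone, and $A(\mathfrak{d})$ is defined to be the resulting $(m-1)\times (m-1)$ integer matrix. Specialising to the prime degree case $m=\ell$ one checks that $c_1 = c_2 = \cdots = c_{\ell - 1}$ and $c_0 - c_1 = {\rm rad}(\mathfrak{d}(K))$, recovering Theorem 1.2 of \cite{M3}.

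The main obstacle lies in the factorisation step. Two non-isomorphic tame $\Z/m\Z$-fields of the same discriminant can correspond to genuinely distinct subgroups $H \neq H' \subset (\Z/N\Z)^*$, and even their completions $K\otimes\Q_p$ need not be $\Q_p$-isomorphic (for instance, $\Q_7$ admits three non-isomorphic totally ramified cyclic cubic extensions). The proof must therefore show that the Ramanujan-sum expression for $c_k$ only sees the coarse order $e_p$ of $G_p$, and not the precise embedding $H \hookrightarrow (\Z/N\Z)^*$ nor the finer local structure of $K_p$. The cyclicity of $G$ is essential here: in $\Z/m\Z$ there is a unique subgroup of each admissible order, so once $e_p$ is fixed the inertia subgroup $G_p \subset G$ is determined, and this rigidity is what allows the local contributions to the circulant entries to stabilise across fields sharing a given discriminant.
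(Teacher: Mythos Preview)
Your plan is sound and runs parallel to the paper's argument, but at a different level of abstraction. The paper does not compute the circulant entries $c_k$ directly: it invokes \cite[Theorems~4.2, 4.5]{MB} as a black box, which already asserts that in a suitable normal integral basis the Gram matrix of the \emph{full} trace form on $\oo_K$ equals $\sum_{d\mid m} a_d A_d$ with coefficients $a_d$ depending only on $\mathfrak{d}(K)$. The paper then merely restricts to the trace-zero submodule via the change-of-basis matrix $\mathbb{B}_m$ (Lemma~\ref{0basis}, Proposition~\ref{MatrizRestCero}, Corollary~\ref{ElUltimoCoro}), observes that the $d=m$ summand is killed by $\mathbb{B}_m$, and reads off $A(\mathfrak{d})$. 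Your Gauss-period/Ramanujan-sum computation is essentially the \emph{content} of the cited results from \cite{MB}, so you are proposing a self-contained proof where the paper is happy to cite. The factorisation step you flag as the main obstacle does go through along the lines you sketch: for squarefree $N$ one has $c_p(1+x_p)\in\{p-1,-1\}$ according to whether $x_p=-1$ in $(\Z/p\Z)^*$; after expanding the product over $p\mid N$, the fibre counts over $\pi^{-1}(\sigma^k)$ depend only on the images $\pi_p(-1)\in G$ and on the subgroups $\langle G_p : p\notin S\rangle\subset G$ for various $S$, and cyclicity of $G$ pins both down once the orders $e_p$ are known. This is precisely the rigidity you invoke.

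One genuine omission: your plan establishes only the implication $\mathfrak{d}(K)=\mathfrak{d}(L)\Rightarrow$ isometric trace-zero forms, not the converse. The paper handles the converse separately (Theorem~\ref{zerotrace1}) by a short index computation: since $K$ is tame the trace is surjective onto $\Z$, so $\Z\oplus\oo_K^0$ has index $m$ in $\oo_K$ (Corollary~\ref{3.2.3}), whence the determinant of any Gram matrix for the trace-zero form equals $m\cdot\mathfrak{d}(K)$. Isometric trace-zero forms therefore force equal discriminants. You should add this one-line step.
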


\begin{remark}

In the case that $m=\ell$ is a prime number, $A(\mathfrak{d}(K))={\rm rad}(\mathfrak{d}(K))\mathcal{A}_{\ell -1}$. Hence the construction here directly generalizes the results in \cite{M3}.

\end{remark}

\subsection{The Shape}

Another quadratic invariant, with a more geometric interpretation and closely related to the trace zero form, that has been studied by several authors is the shape of $K$.  Endow $K$ with the real-valued  $\mathbb{Q}$-bilinear form $b_K$ whose associated quadratic form is given by \[b_K(x,x):= \sum_{\sigma : K \hookrightarrow \mathbb{C}}  |\sigma(x)|^2.\] The \textit{shape} of $K$, denoted $\textrm{Sh}(K)$, is  the isometry equivalence class of $ (\oo_ K^{\bot},b_K)$ up to scalar multiplication, where $\oo_ K^{\bot}$ is the image of $\oo_K$ under the projection map, 
 $\alpha \mapsto \alpha_{\bot}:=n\alpha-\tr_{K/\mathbb{Q}}(\alpha)$, i.e.,
\[\oo_ K^{\bot}:= \{\alpha_\bot : \alpha \in \oo_K \}=(\mathbb{Z}+n \oo_K) \cap \oo_{K}^{0}.\]
Thus $\textrm{Sh}(K)=\textrm{Sh}(L)$ if and only if $(\oo_K^{\bot},b_K) \simeq  (\oo_L^{\bot}, \lambda b_L) $ for some $\lambda \in \mathbb{R}^{*}$. Equivalently,  ${\rm Sh}(K)$ can be thought as the  $(n-1)$-dimensional lattice inside $\R^{n}$, via the Minkowski embedding, that is the orthogonal complement of $1$ and that is defined up to reflection, rotations and scaling by $\R^{*}$. Hence {\rm Sh}(K) corresponds to an element to the {\it space of shapes} \[\mathcal{S}_{n-1}:= {\rm GL}_{n-1}(\Z) \setminus {\rm GL}_{n-1}(\R)/{\rm GO}_{n-1}(\R).\] The distribution of shapes of number fields in $\mathcal{S}_{n}$ have been the subject  of interesting current research (see \cite{bhargavaPh,RobH,RobH1,M3}).  It turns out that for cyclic real number fields  the Shape is determined by the discriminant, moreover:

\begin{teo}[cf. Theorem \ref{TheShape}]
Let $m$ be a positive integer and let $K$ and $L$ two totally real tame $\Z/m\Z$-number fields. Then, the following are equivalent: 

\begin{itemize}

\item[(a)] $\left<\oo_K , \Tr_{K / \QQ} ()\right> \simeq  \left<\oo_L , \Tr_{L / \QQ} () \right>.$

\item[(b)] $(\oo_K^{\bot},b_K) \simeq  (\oo_L^{\bot}, b_L).$ 

\item[(c)] $\left<\oo_K^{0} , \Tr_{K / \QQ} ()|_{\oo_K}\right> \simeq \left<\oo_L^{0} , \Tr_{L / \QQ} ()|_{\oo_L}\right>.$

\item[(d)] $\mathfrak{d}(K)=  \mathfrak{d}(L)$.

\end{itemize}

\end{teo}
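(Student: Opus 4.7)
The plan is to close the cycle $(d)\Rightarrow(c)\Rightarrow(a)\Rightarrow(b)\Rightarrow(d)$, with the totally real hypothesis entering crucially in the passage $(c)\Rightarrow(a)$. The implication $(d)\Rightarrow(c)$ is immediate from Theorem~\ref{main}: in bases supplied by the theorem both $\oo_K^0$ and $\oo_L^0$ receive the common Gram matrix $A(\mathfrak{d})$, and are therefore isometric as quadratic $\ZZ$-lattices. Each of $(a)$, $(b)$, $(c)$ implies $(d)$ by a discriminant comparison: the determinant of the Gram matrix of each of the three lattices is a concrete function of $n$ and $\mathfrak{d}(K)$ (for instance $\det(\Tr|_{\oo_K^0}) = n\,\mathfrak{d}(K)$, once one observes that $[\oo_K:\ZZ\oplus\oo_K^0]=n$ in the tame cyclic setting), so any isometry of lattices forces $\mathfrak{d}(K)=\mathfrak{d}(L)$.

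For $(a)\Rightarrow(c)$ I would exploit positivity of the trace form. For nonzero $\alpha\in\oo_K$, AM--GM applied to the nonnegative reals $\sigma(\alpha)^2$ yields
\[
\Tr(\alpha^2) \;=\; \sum_{\sigma}\sigma(\alpha)^2 \;\geq\; n\,|N_{K/\QQ}(\alpha)|^{2/n} \;\geq\; n,
\]
with equality iff $|\sigma(\alpha)|=1$ for every embedding, iff $\alpha$ is a root of unity in $\oo_K$, iff (since $K$ is totally real) $\alpha=\pm 1$. Thus $\pm 1$ are the unique norm-$n$ vectors of $(\oo_K,\Tr)$, and any isometry $\phi\colon\oo_K\to\oo_L$ must send $1$ to $\pm 1$. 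Since $\oo_K^0=(\ZZ\cdot 1)^{\bot}\cap\oo_K$, restricting $\phi$ yields an isometry $\oo_K^0\simeq\oo_L^0$.

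The reverse implication $(c)\Rightarrow(a)$ is the main obstacle. Starting from $\phi_0\colon\oo_K^0\to\oo_L^0$, the $\QQ$-linear extension $\tilde\phi\colon K\to L$ determined by $\tilde\phi(1)=1$ is automatically a $\QQ$-isometry (the decomposition $K=\QQ\cdot 1\oplus K^0$ is orthogonal for $\Tr$), but one must verify $\tilde\phi(\oo_K)=\oo_L$. For tame cyclic $K$ the trace $\Tr\colon\oo_K\to\ZZ$ is surjective, so a short computation shows that $\oo_K/(\ZZ\oplus\oo_K^0)$ is cyclic of order $n$, generated by an element $\beta_K=(1+\delta_K)/n$ with $\delta_K\in\oo_K^0$; and similarly for $L$. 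The condition $\tilde\phi(\oo_K)=\oo_L$ therefore reduces to showing that $\phi_0(\delta_K)\equiv u\,\delta_L\pmod{n\,\oo_L^0}$ for some $u\in(\ZZ/n\ZZ)^{\times}$. To establish this I plan to identify $\delta_K$ intrinsically in the basis of $\oo_K^0$ used to build $A(\mathfrak{d})$ in the proof of Theorem~\ref{main}, and show that its class modulo $n\,\oo_K^0$ depends only on $\mathfrak{d}$.

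Finally $(a)\Rightarrow(b)$ is easy: $\oo_K^{\bot}$ is canonically defined from $(\oo_K,\Tr)$ by $\alpha\mapsto n\alpha-\Tr(\alpha)$, so any trace isometry $\oo_K\simeq\oo_L$ (which by the previous step sends $1$ to $\pm 1$) restricts to an isometry $\oo_K^{\bot}\simeq\oo_L^{\bot}$; for totally real $K$ the form $b_K$ coincides with $\Tr$ on $K$, so this is the required isometry of $b_K$-lattices. The principal difficulty in the whole argument lies in the glue analysis of $(c)\Rightarrow(a)$, which forces us to open up the explicit construction underlying the main theorem rather than treat it as a black box.
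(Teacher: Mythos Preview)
The genuine gap is the implication $(c)\Rightarrow(a)$: your glue analysis remains a plan, not a proof. Even if you can write down $\delta_K$ and $\delta_L$ explicitly in the canonical bases produced by Theorem~\ref{main}, an arbitrary isometry $\phi_0\colon\oo_K^0\to\oo_L^0$ has no reason to respect those bases, so knowing that $\delta_K$ and $\delta_L$ have ``the same coordinates'' does not yield $\phi_0(\delta_K)\equiv u\,\delta_L\pmod{n\oo_L^0}$. What you would really need is a characterization of the coset $\delta_K+n\oo_K^0$ purely in terms of the quadratic structure of $\oo_K^0$, and that is not supplied.

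The paper avoids this difficulty entirely by routing everything through $(d)$. The key observation you are missing is that $(d)\Rightarrow(a)$ is already available: \cite[Theorems~4.2 and~4.5]{MB} (the same results invoked throughout this paper to write the Gram matrix of $\langle\oo_K,\Tr_{K/\QQ}\rangle$ as $\sum_{d\mid m}a_dA_d$ with coefficients depending only on $\mathfrak{d}(K)$) give $(a)\Leftrightarrow(d)$ directly. Combined with your $(c)\Rightarrow(d)$ (or Theorem~\ref{zerotrace1}) this yields $(c)\Rightarrow(d)\Rightarrow(a)$ with no glue analysis at all. The paper then cites \cite[Lemma~5.1]{Casimir} for $(a)\Rightarrow(b)$ and \cite[Lemma~2.1]{M4} for $(b)\Rightarrow(d)$. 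Your minimal-vectors argument for $(a)\Rightarrow(c)$ and your direct argument for $(a)\Rightarrow(b)$ are both correct and pleasant, but once $(d)$ is the hub they are not needed to close the equivalence.
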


\section{The trace zero module}
In this section, we study the behavior of the trace zero module of cyclic number fields tamely ramified and the connection with their discriminant. The main goal is to extend some results about trace zero modules developed in \cite{Man} and \cite{M3}.\\

\begin{definition}
Let $K$ be a number field and $\oo_K$ its maximal order. The trace zero module $\oo_K^{0}$ is defined by $$ \oo_K^{0}:= \{x \in \oo_K : \Tr_{K / \QQ}(x) = 0 \}. $$
\end{definition}

\begin{lemma}\label{0basis}
Let $K$ be a tamely ramified cyclic number field of degree $m$. Let $\sigma \in \Gal(K / \QQ)$  be a generator of the Galois group. Suppose  $B:=\{ \mathbb{e}_{1},..., \mathbb{e}_{m}\}$  is a normal integral basis for $\oo_K$, i.e., an integral basis such that $\sigma(\mathbb{e}_{m}) =\mathbb{e}_{1}$  and $\sigma(\mathbb{e}_{j}) =\mathbb{e}_{j+1}$ for  $1 \leq j \leq m-1$. Then $B_{0}:=\{ \mathbb{e}_1 - \mathbb{e}_2, \mathbb{e}_2 - \mathbb{e}_3, \dots, \mathbb{e}_{m-1} - \mathbb{e}_{m}\}$ is an integral basis for $\oo_K^{0}$.
\end{lemma}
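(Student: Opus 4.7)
The plan is to proceed in three steps: first identify the trace of each basis element $\mathbb{e}_j$, then characterize elements of $\oo_K^0$ in coordinates, and finally write any trace-zero element explicitly as an integral combination of the $\mathbb{e}_j-\mathbb{e}_{j+1}$.

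First I would observe that, because $B$ is normal, for every $j$ the Galois orbit of $\mathbb{e}_j$ is exactly $\{\mathbb{e}_1,\dots,\mathbb{e}_m\}$, so $\Tr_{K/\QQ}(\mathbb{e}_j)=\mathbb{e}_1+\cdots+\mathbb{e}_m=:t$ for all $j$. Note $t\in\oo_K$ is fixed by $\sigma$, hence $t\in\ZZ$. To pin $t$ down I would expand $1=\sum_{j}c_j\mathbb{e}_j$ with $c_j\in\ZZ$ and apply $\sigma$; by uniqueness all $c_j$ must coincide, so $1=c(\mathbb{e}_1+\cdots+\mathbb{e}_m)=ct$, forcing $t=\pm 1$. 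In particular $t\neq 0$, which is all I really need for what follows; the immediate consequence is that $x=\sum a_j\mathbb{e}_j\in\oo_K$ lies in $\oo_K^0$ if and only if $\sum_j a_j=0$.

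Next, the elements $\mathbb{e}_j-\mathbb{e}_{j+1}$ clearly have trace $0$ (their coordinates sum to $0$), so $B_0\subset\oo_K^0$, and they are $\ZZ$-linearly independent since $B$ is. To see they generate $\oo_K^0$ over $\ZZ$, I would take $x=\sum_{j=1}^{m}a_j\mathbb{e}_j\in\oo_K^0$ and try to solve $x=\sum_{j=1}^{m-1}b_j(\mathbb{e}_j-\mathbb{e}_{j+1})$. Matching coefficients of $\mathbb{e}_j$ forces $b_1=a_1$, $b_j-b_{j-1}=a_j$ for $1<j<m$, and $-b_{m-1}=a_m$; the explicit solution $b_j=a_1+a_2+\cdots+a_j$ for $1\leq j\leq m-1$ is an integer for each $j$, and the consistency condition $-b_{m-1}=a_m$ is exactly $\sum_{j=1}^{m}a_j=0$, which holds by the previous paragraph.

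None of the steps presents a real obstacle; the only subtle point is verifying that $t\neq 0$ (equivalently, that the trace map $\oo_K\to\ZZ$ is nonzero on the normal basis), which I handle by the Galois-invariance argument on $1$. Once that is in hand, both inclusions $B_0\subset\oo_K^0$ and $\oo_K^0\subset\ZZ B_0$ follow from elementary coordinate manipulations, completing the proof.
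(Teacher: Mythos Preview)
Your proof is correct but proceeds along a different line from the paper's. The paper invokes the additive form of Hilbert's Theorem~90 to write any $u\in\oo_K^0$ as $b-\sigma(b)$ for some $b\in K$, expands $b$ in the basis $B$ with rational coefficients $c_j$, and then observes that the resulting coefficients $c_{j+1}-c_j$ of $u$ in $B$ are integers because $u\in\oo_K$; a telescoping argument then gives the $\ZZ$-combination of the $\mathbb{e}_j-\mathbb{e}_{j+1}$. You instead avoid Hilbert~90 entirely by first pinning down $\Tr_{K/\QQ}(\mathbb{e}_j)=t=\pm 1$ via the Galois invariance of $1$, which immediately characterizes $\oo_K^0$ as the set of $\sum a_j\mathbb{e}_j$ with $\sum a_j=0$, and then you solve the resulting linear system directly with the partial sums $b_j=a_1+\dots+a_j$. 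Your route is more elementary and keeps everything inside integer linear algebra; the paper's route is slightly less explicit about the trace of the $\mathbb{e}_j$ but shows how the cyclic structure (through Hilbert~90) naturally produces the telescoping expression.
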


\begin{proof}
Since $B$ is a normal basis each element $\mathbb{e}_j - \mathbb{e}_{j+1}$  of $B_{0}$ belongs to $\oo_K^{0}$. On the other hand, thanks to  Hilbert's 90, if $u \in K$ is such that $\Tr_{K / \QQ}(u) = 0$ then there exist $b \in K$ such that $u = b - \sigma(b)$. Therefore, if $u \in \oo_K^{0}$ then
\begin{eqnarray*}
 u  & = & b - \sigma(b) \\
 & = & (c_1 \mathbb{e}_1 + c_2 \mathbb{e}_2 + \dots + c_{m} \mathbb{e}_{m}) - \sigma(c_1 \mathbb{e}_1 + c_2 \mathbb{e}_2 + \dots + c_{m}\mathbb{e}_{m}) \\
 &= & (c_1 - c_{m})\mathbb{e}_1 + (c_2 - c_1)\mathbb{e}_2 + \dots + (c_m - c_{m-1})\mathbb{e}_{m},
\end{eqnarray*}
for some $c_i \in \QQ$.  Since $u \in \oo_K$ we must have $c_{j+1} - c_j \in \ZZ$ for $j = 0, 1, \dots, (m-1)$, where we define $c_0 := c_{m}$. Moreover, since

\[c_i - c_{m} = \sum_{j = 1}^{i} (c_j - c_{j-1}) \in \ZZ\] by rearranging we obtain
\begin{eqnarray*}
u  & = & (c_1 - c_{m})\mathbb{e}_1 + (c_2 - c_1)\mathbb{e}_2 + \dots + (c_m - c_{m-1})\mathbb{e}_{m} \\
  & = & (c_1 - c_{m})(\mathbb{e}_1 - \mathbb{e}_2) + (c_2 - c_{m})(\mathbb{e}_2 - \mathbb{e}_3) + \dots + (c_{m-1} - c_{m})(\mathbb{e}_{m-1} - \mathbb{e}_{m}).
\end{eqnarray*}
By ranks  we conclude that $\{ \mathbb{e}_1 - \mathbb{e}_2, \mathbb{e}_2 - \mathbb{e}_3, \dots, \mathbb{e}_{m-1} - \mathbb{e}_{m} \}$ is an integral basis for $\oo_{K}^0.$
\end{proof}

\subsection{A Gram matrix representation of the trace zero form}

In this section we use a trace zero basis, coming from a normal integral basis as in the previous section, to find a canonical Gram matrix for the integral trace zero form.\\

\begin{lemma}{\label{3.2.2}}Let $K$ be a number field of degree $m$ and let $G_K := \ZZ + \oo_K^{0}$. We have
\[ \left|\oo_K / G_K \right| = \left|\Tr_{K / \QQ} (\oo_K) / m\ZZ \right|.\]
\end{lemma}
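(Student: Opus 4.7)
The plan is to view this statement as a straightforward consequence of the first isomorphism theorem applied to the trace map, once one identifies the image of $G_K$ under the trace.

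First, I would set up the short exact sequence of $\ZZ$-modules
\[
0 \longrightarrow \oo_K^{0} \longrightarrow \oo_K \xrightarrow{\;\Tr_{K/\QQ}\;} \Tr_{K/\QQ}(\oo_K) \longrightarrow 0,
\]
which comes for free from the definition $\oo_K^{0}=\ker(\Tr_{K/\QQ}|_{\oo_K})$. Note that $\Tr_{K/\QQ}(\oo_K)$ is indeed a subgroup of $\ZZ$ (since algebraic integers have integer trace), so this is an exact sequence of finitely generated abelian groups.

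Next, I would compute the image of $G_K=\ZZ+\oo_K^{0}$ under $\Tr_{K/\QQ}$. For any $n\in\ZZ$ we have $\Tr_{K/\QQ}(n)=mn$, and of course $\Tr_{K/\QQ}$ vanishes on $\oo_K^{0}$; consequently $\Tr_{K/\QQ}(G_K)=m\ZZ$. Since $G_K$ visibly contains $\oo_K^{0}=\ker(\Tr_{K/\QQ}|_{\oo_K})$, the standard correspondence (third isomorphism theorem) applied to the above sequence yields
\[
\oo_K/G_K \;\cong\; \Tr_{K/\QQ}(\oo_K)\big/\Tr_{K/\QQ}(G_K)\;=\;\Tr_{K/\QQ}(\oo_K)/m\ZZ,
\]
from which the equality of orders follows at once.

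There is essentially no obstacle here beyond bookkeeping: the only substantive observation is that $\Tr_{K/\QQ}(\ZZ)=m\ZZ$, and everything else is formal. The proof can be written in a few lines.
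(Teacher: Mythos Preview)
Your proof is correct and follows essentially the same approach as the paper: both arguments note that $G_K$ contains the kernel of the trace map and that $\Tr_{K/\QQ}(G_K)=m\ZZ$, then apply the isomorphism theorems to conclude.
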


\begin{proof}
By the isomorphism theorem on groups the result follows from:

\begin{itemize}
\item The group $G_K$ is a subgroup of  $\oo_K$ that contains the Kernel of the trace map,
\item the image of $G_K$ under the trace is equal to $m\ZZ$.
\end{itemize}
\end{proof}

\begin{corollary}{\label{3.2.3}} If $K$ is a tamely ramified number field of degree $m$, then
$$ \left|\oo_K / G_K \right| = m.$$
\end{corollary}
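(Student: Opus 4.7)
The plan is to apply Lemma \ref{3.2.2}, which reduces the statement to computing $|\Tr_{K/\Q}(\oo_K)/m\Z|$. The key observation is that $\Tr_{K/\Q}(\oo_K)$ is an additive subgroup of $\Z$ that contains $m\Z$, since $\Tr_{K/\Q}(1)=m$. Thus $\Tr_{K/\Q}(\oo_K)=d\Z$ for some positive divisor $d$ of $m$, and the assertion $|\oo_K/G_K|=m$ is equivalent to $d=1$, i.e.\ to surjectivity of the trace map $\Tr_{K/\Q}\colon\oo_K\to\Z$.

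To establish this surjectivity I would pass to completions. For each rational prime $p$ we have $K\otimes_\Q\Q_p\cong\prod_{\mathfrak{p}\mid p}K_\mathfrak{p}$ and $\oo_K\otimes_\Z\Z_p\cong\prod_{\mathfrak{p}\mid p}\oo_{K_\mathfrak{p}}$, and the global trace decomposes accordingly as the sum of the local traces $\Tr_{K_\mathfrak{p}/\Q_p}$. Since an inclusion of ideals of $\Z$ is an equality if and only if it is an equality after every $p$-adic completion, the problem reduces to verifying that $\Tr_{K_\mathfrak{p}/\Q_p}(\oo_{K_\mathfrak{p}})=\Z_p$ for every prime $\mathfrak{p}$ of $\oo_K$.

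The main obstacle, and the only step where the tameness hypothesis is used, is the following local surjectivity statement: if $L/\Q_p$ is a finite extension that is at most tamely ramified, then $\Tr_{L/\Q_p}(\oo_L)=\Z_p$. This is a standard fact (for instance from the chapter on the different in Serre's \emph{Local Fields}): in the tame case the different satisfies $v_L(\mathfrak{D}_{L/\Q_p})=e-1$, so $\mathfrak{D}_{L/\Q_p}^{-1}$ contains an element whose trace is a unit of $\Z_p$; equivalently, the induced trace on residue fields is nonzero, hence surjective by separability, and one lifts by Hensel or by $\Z_p$-linearity.

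Granting this local statement, the hypothesis that $K$ has no wildly ramified prime implies that every completion $K_\mathfrak{p}/\Q_p$ is at most tamely ramified, so each local trace is surjective onto $\Z_p$. Summing up over $\mathfrak{p}\mid p$ and ranging over all $p$ yields $\Tr_{K/\Q}(\oo_K)=\Z$, whence Lemma \ref{3.2.2} gives $|\oo_K/G_K|=|\Z/m\Z|=m$, completing the proof.
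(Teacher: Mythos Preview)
Your proof is correct and follows essentially the same approach as the paper: both reduce via Lemma~\ref{3.2.2} to the statement that $\Tr_{K/\Q}(\oo_K)=\Z$ under the tameness hypothesis. The only difference is that the paper cites this surjectivity directly from Narkiewicz \cite[Corollary~5 to Theorem~4.24]{Nar}, whereas you supply a self-contained local--global argument for it.
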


\begin{proof}
Since $K$ is tamely ramified, by \cite[Corollary 5 to Theorem 4.24]{Nar} \[\Tr_{K/ \QQ}(\oo_K) = \ZZ.\]
Using this, the result follows from Lemma \ref{3.2.2}.
\end{proof}

\begin{theorem}{\label{zerotrace1}}
Let $K$ and $L$ be tamely ramified cyclic number fields of degree $m$. Then,
\[ \left<\oo_K^{0} , \Tr_{K / \QQ} ()|_{\oo_K}\right> \simeq  \left<\oo_L^{0} , \Tr_{L / \QQ} ()|_{\oo_L} \right> 
\ \mbox{ if and only if} \  \  \mathfrak{d}(L) = \mathfrak{d}(K). \]
\end{theorem}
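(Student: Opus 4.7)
The ``only if'' direction is a short discriminant computation. The elements $1 \in \Z$ and any $b \in \oo_K^0$ are orthogonal under the trace pairing because $\Tr(1 \cdot b) = \Tr(b) = 0$, so $G_K = \Z \cdot 1 \oplus \oo_K^0$ is an orthogonal direct sum of quadratic $\Z$-modules with $\text{disc}(G_K) = m \cdot \text{disc}\langle \oo_K^0, \Tr\rangle$. By Corollary \ref{3.2.3} we have $[\oo_K : G_K] = m$, so also $\text{disc}(G_K) = m^2 \mathfrak{d}(K)$. Combining yields $\text{disc}\langle \oo_K^0, \Tr\rangle = m\,\mathfrak{d}(K)$, and since any isometry of quadratic $\Z$-modules preserves the discriminant, this forces $\mathfrak{d}(K) = \mathfrak{d}(L)$.

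For the ``if'' direction, the plan is to exhibit a Gram matrix of $\langle \oo_K^0, \Tr\rangle$ that depends only on $\mathfrak{d}(K)$. By Noether's theorem on tame abelian extensions, $\oo_K$ admits a normal integral basis $\{e_1, \ldots, e_m\}$ with $\sigma(e_j) = e_{j+1}$, and Lemma \ref{0basis} supplies the basis $B_0 = \{e_1 - e_2, \ldots, e_{m-1} - e_m\}$ for $\oo_K^0$. Setting $t_k := \Tr(e_1 \sigma^k(e_1))$, Galois invariance of the trace yields $\Tr(e_i e_j) = t_{j-i \bmod m}$, so the Gram matrix of $\Tr$ on $\oo_K$ in the normal basis is the circulant with first row $(t_0, \ldots, t_{m-1})$, and the Gram matrix of $\Tr$ on $\oo_K^0$ in $B_0$ has entries $2t_{j-i} - t_{j-i+1} - t_{j-i-1}$. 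This reduces the theorem to showing that the sequence $(t_0, \ldots, t_{m-1})$ is an invariant of $\mathfrak{d}(K)$.

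This reduction is the principal obstacle. Via the character decomposition $K \otimes \C = \bigoplus_\chi K_\chi$ over the characters $\chi$ of $\Gal(K/\Q)$ and the orthogonality relation $\Tr(\alpha_\chi \alpha_\psi) = m\, \alpha_\chi \alpha_{\chi^{-1}} \delta_{\psi,\chi^{-1}}$, one rewrites $t_k = m \sum_\chi \chi(\sigma)^{-k} c_\chi$, where $c_\chi := \alpha_\chi \alpha_{\chi^{-1}} \in \Q$ is the product of the $\chi$- and $\chi^{-1}$-components of $\alpha := e_1$. My plan would be to select the normal basis generator canonically through Leopoldt's construction (Gauss-sum type resolvents arising from the embedding of $K$ into a cyclotomic field guaranteed by Kronecker--Weber), so that each $c_\chi$ becomes expressible in terms of the conductor $f(\chi)$, and then to invoke the conductor--discriminant formula together with the tame local classification --- the identity $v_p(\mathfrak{d}(K)) = m - m/e_p$ recovers the inertia index $e_p$ at each ramified prime $p$, which pins down $f(\chi)$ character by character --- to conclude that the circulant $(t_k)$ depends only on $\mathfrak{d}(K)$. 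A final reduction via the Chinese Remainder Theorem, decomposing $\Z/m\Z$ into its prime-power factors and $K$ into the corresponding compositum, would further trim the work to the prime-power case and recover the formula $A(\mathfrak{d}) = {\rm rad}(\mathfrak{d})\,\mathcal{A}_{\ell-1}$ of \cite[Theorem 2.9]{M3} for $m = \ell$ prime as a special case.
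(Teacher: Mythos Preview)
Your ``only if'' direction is exactly the paper's argument: the orthogonal splitting $G_K=\Z\cdot 1\oplus\oo_K^{0}$ together with Corollary~\ref{3.2.3} yields $\mathrm{disc}\langle\oo_K^{0},\Tr\rangle=m\,\mathfrak d(K)$, and isometries preserve discriminants.

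For the ``if'' direction the paper takes a much shorter route than you do. It simply invokes \cite[Theorems~4.2 and~4.5]{MB}, which already provide, for any two tame cyclic degree-$m$ fields with the same discriminant, normal integral bases $\{\mathbb e_i\}$ of $\oo_K$ and $\{\mathbb e'_i\}$ of $\oo_L$ such that $\mathbb e_i\mapsto\mathbb e'_i$ is an isometry of the \emph{full} integral trace forms; one then restricts this isometry to the trace-zero parts via Lemma~\ref{0basis} and is done. Your plan is not a different argument so much as a sketch of what lies \emph{inside} \cite{MB}: you reduce to the circulant $(t_k)$, propose choosing the normal-basis generator canonically \`a la Leopoldt so that each $c_\chi=\alpha_\chi\alpha_{\chi^{-1}}$ depends only on $f(\chi)$, and then recover the conductors from $\mathfrak d(K)$ via $v_p(\mathfrak d)=m-m/e_p$ together with the uniqueness of the order-$e_p$ subgroup of $\Z/m\Z$. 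That outline is sound and is essentially the machinery behind \cite{MB}, but the heaviest step---the explicit resolvent computation showing the Leopoldt generator makes $c_\chi$ a function of $f(\chi)$ alone---remains only a plan in your write-up. The paper's approach buys brevity by treating that work as already done; yours would buy self-containment at the price of carrying the Gauss-sum calculation through in full.
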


\begin{proof}
Suppose that  $ \left<\oo_K^{0} , \Tr_{K / \QQ} ()|_{\oo_K}\right> \simeq  \left<\oo_L^{0} , \Tr_{L / \QQ} ()|_{\oo_L} \right> $. Since the decomposition of $G_{K}=\ZZ + \oo_K^{0}$  (resp $G_{L}=\ZZ + \oo_L^{0}$)  is an orthogonal decomposition with respect to the trace pairing we have the following equalities between determinants of the trace in the respective modules:
\[\mathfrak{d}(G_K) = \mathfrak{d}(\oo_K^{0}) m  = \mathfrak{d}(\oo_L^{0})   m = \mathfrak{d}(G_L).\]
Since
\[\mathfrak{d}(G_K) =\left| \oo_K / G_K \right|^2 \mathfrak{d}(\oo_K) \  \mbox{and} \  \mathfrak{d}(G_L) =\left| \oo_L / G_L \right|^2 \mathfrak{d}(\oo_L) \]
the result follows thanks to Corollary \ref{3.2.3}.\\

\noindent On the other hand, if $\mathfrak{d}(K) = \mathfrak{d}(L)$ then by \cite[Theorems 4.2 and 4.5]{MB} there exist normal integral bases  $B:=\{ \mathbb{e}_{1},..., \mathbb{e}_{m}\}$  and  $B':=\{ \mathbb{e}'_{1},..., \mathbb{e}'_{m}\}$  of $\oo_K$ and $\oo_L$, respectively, such that   $\left<\oo_K , \Tr_{K / \QQ} ()\right> $ and $  \left<\oo_L , \Tr_{L / \QQ} () \right>$ are isometric via an isometry  \[\gamma: \oo_K \to \oo_L\] such that $\gamma(\mathbb{e}_{i})=\mathbb{e}'_{i}$ for all $1 \leq i \leq m$. It follows from Lemma \ref{0basis} that such an isometry $\gamma$ restricts  to an isometry between the quadratic modules $\left<\oo_K^{0} , \Tr_{K / \QQ} ()|_{\oo_K}\right>$ and $ \left<\oo_L^{0} , \Tr_{L / \QQ} ()|_{\oo_L} \right>$. 
\end{proof}

\begin{definition}
Let $m$ be a positive integer.  For every $d$ a positive divisor of $m$  we let $A_d$ be  an $m \times m $ matrix defined by
\[ (A_d)_{i,j} :=  \left\{ \begin{array}{ll} 1 & \mbox{ if } \frac{m}{d} \big| (i-j) \\
& \\
0 & {\rm otherwise}.  \end{array} \right.\]

\end{definition}

Suppose that $K$ is a tame cyclic number field of degree $m$.  Theorems 4.2 and 4.5 of \cite{MB} state that  there exists a normal integral basis $B$ of $\oo_K$  such that  the Gram matrix of
$\left<\oo_K , \Tr_{K / \QQ} ()\right> $ in such basis is equal to  \[\sum_{d|m} a_d A_d,\]
where $a_d$ are integers  given in Lemma 4.3 of \cite{MB} that only depend on the discriminant of $K$. For the reader's convenience see also Definition \ref{LosCoeficientes}.\\  

We devote the rest of this section to describe a similar canonical decomposition, i.e., a Gramm matrix depending only on the discriminant of the field, for the trace zero module $\left<\oo^{0}_K , \Tr_{K / \QQ} ()\right> $.\\

Let $n \ge 2$ be an integer . Then, we define $\mathbb{B}_n$ as the $n \times n$ matrix

\[\begin{pmatrix}  1 & 0 & 0& \dots & -1 \\ -1 & 1 & 0 & \dots & 0 \\ 0 & -1 & 1 & \dots & 0 \\  \vdots & 0 & \ddots & \ddots & \vdots \\ 0& 0 & \cdots &  -1 & 1 \\ \end{pmatrix}.\] In other words,

\[ (\mathbb{B}_n)_{i,j}:= \left\{ \begin{array}{rl}
     1 & \mbox{ if }  i=j  \\
     -1 & \mbox{ if } i=j+1 \ {\rm or} \ (i,j) =(1,n)\\
     0 & \mbox{ otherwise. }
\end{array} \right. \]

\begin{proposition}\label{MatrizRestCero}
Let $m$ be an integer bigger than $1$ and let $K$ be a degree $m$ number field. Let $M$ be the Gram Matrix of $ \left<\oo_K , \Tr_{K / \QQ} ()\right> $ with respect to $\{ \mathbb{e}_{1},..., \mathbb{e}_{m}\}$, an integral basis of$\oo_K$. Let $M_0$ be the Gram matrix of $\left<\oo_K^{0} , \Tr_{K / \QQ} ()|_{\oo_K^{0}} \right> $ with respect to $\{\mathbb{e}_1 - \mathbb{e}_2, \mathbb{e}_2 - \mathbb{e}_3, \dots, \mathbb{e}_{m-1}- \mathbb{e}_{m} \}$. Then, $M_0$ is equal to $(\mathbb{B}_m^T M \mathbb{B}_m)_{(1,1)}$, the $(m,m)$ minor of $\mathbb{B}_m^T M \mathbb{B}_m$ (the matrix obtained by deleting the last row and column).  
\end{proposition}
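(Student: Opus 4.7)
The plan is to realize $\mathbb{B}_m$ as the matrix whose columns are the coordinate vectors, with respect to $\{\mathbb{e}_1, \ldots, \mathbb{e}_m\}$, of an enlarged cyclic system of difference vectors, and then to identify $M_0$ with a principal submatrix of $\mathbb{B}_m^T M \mathbb{B}_m$.

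First I would set $f_j := \mathbb{e}_j - \mathbb{e}_{j+1}$ for $1 \leq j \leq m-1$ and $f_m := \mathbb{e}_m - \mathbb{e}_1$ (note that $f_m = -(f_1 + \cdots + f_{m-1})$). A direct reading of the definition of $\mathbb{B}_m$ shows that its $j$-th column is precisely the coordinate vector of $f_j$ with respect to $\{\mathbb{e}_i\}$. By the standard change-of-basis rule for Gram matrices,
\[
(\mathbb{B}_m^T M \mathbb{B}_m)_{ij} \; = \; \Tr_{K/\QQ}(f_i f_j) \qquad (1 \leq i,j \leq m).
\]
Since by construction $(M_0)_{ij} = \Tr_{K/\QQ}(f_i f_j)$ for $1 \leq i, j \leq m-1$, this already identifies $M_0$ with the top-left $(m-1) \times (m-1)$ principal submatrix of $\mathbb{B}_m^T M \mathbb{B}_m$.

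To match this with the $(1,1)$-minor (the bottom-right $(m-1) \times (m-1)$ block obtained by deleting the first row and first column), I would invoke the cyclic Galois symmetry built into the normal integral basis of Lemma \ref{0basis}. Letting $\sigma$ be the generator of $\Gal(K/\QQ)$ with $\sigma(\mathbb{e}_j) = \mathbb{e}_{j+1}$ cyclically, a short computation shows that $\sigma$ cyclically permutes $f_1, \ldots, f_m$. Since the trace pairing is Galois-invariant, $\Tr_{K/\QQ}(f_i f_j) = \Tr_{K/\QQ}(f_{i+1} f_{j+1})$ cyclically, so $\mathbb{B}_m^T M \mathbb{B}_m$ is circulant. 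For a circulant matrix, any two principal submatrices of the form ``delete the $k$-th row and column'' coincide; hence the $(1,1)$-minor equals the top-left submatrix, namely $M_0$.

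The main obstacle is essentially bookkeeping: one must verify carefully that the ``phantom'' last column of $\mathbb{B}_m$, encoding $f_m = \mathbb{e}_m - \mathbb{e}_1$, is exactly the missing cyclic conjugate of the basis $\{f_1, \ldots, f_{m-1}\}$ of $\oo_K^0$, so that the enlarged system $\{f_1, \ldots, f_m\}$ forms a genuine Galois orbit. Once this is confirmed, everything else follows from the change-of-basis formula together with the circulant symmetry.
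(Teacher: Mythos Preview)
Your approach is essentially the same as the paper's---both read off the columns of $\mathbb{B}_m$ as the coordinate vectors of the $f_j = \mathbb{e}_j - \mathbb{e}_{j+1}$ and conclude via change of basis---but you have been more careful than the paper at one point, and it is worth flagging.

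Carried out correctly, the entrywise computation gives
\[
(\mathbb{B}_m^T M \mathbb{B}_m)_{i,j} \;=\; \Tr_{K/\QQ}(f_i f_j),
\]
so $M_0$ coincides with the \emph{top-left} $(m-1)\times(m-1)$ block, i.e.\ the $(m,m)$-minor, exactly as you say. The paper's displayed computation contains an off-by-one slip (it records $M_{j-1,i-1}, M_{j-1,i}, \ldots$ where $M_{j+1,i+1}, M_{j+1,i},\ldots$ should appear) and thereby lands on $(M_0)_{i-1,j-1}$ without justification. Your added step---observing that $\sigma$ cyclically permutes $f_1,\ldots,f_m$, hence $\mathbb{B}_m^T M \mathbb{B}_m$ is circulant and all ``delete row/column $k$'' principal minors agree---is precisely what is needed to pass from the $(m,m)$-minor to the $(1,1)$-minor claimed in the statement.

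One comment: this circulant step uses that $\{\mathbb{e}_i\}$ is a \emph{normal} integral basis for a cyclic $K$, which is not among the stated hypotheses of the proposition (it only says $\{\mathbb{e}_i\}\subseteq\oo_K$). Without that assumption the $(1,1)$- and $(m,m)$-minors need not coincide, so strictly speaking the proposition requires this extra hypothesis. You are right to import it from Lemma~\ref{0basis}; in any case the proposition is only ever applied (Corollary~\ref{ElUltimoCoro}) in exactly that setting, so no harm is done downstream.
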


\begin{proof}

Let $i,j$ be integers such that $1\leq i,j < m$. Then, 
\begin{eqnarray*}
(\mathbb{B}_m^T M \mathbb{B}_m)_{i,j} &=& \sum_{k=1}^{m}\sum_{l=1}^{m}\mathbb{B}_{k,i}M_{k,l}\mathbb{B}_{l,j}  \\  
&=&M_{i,j}-M_{i,j+1}-M_{i+1,j}+ M_{i+1, j+1} \\
&= &\Tr_{K / \QQ} (\mathbb{e}_{i}\mathbb{e}_{j}) -\Tr_{K / \QQ} (\mathbb{e}_{i}\mathbb{e}_{j+1})-\Tr_{K / \QQ} (\mathbb{e}_{i+1}\mathbb{e}_{j})+\Tr_{K / \QQ} (\mathbb{e}_{i+1}\mathbb{e}_{j+1}) \\
&= & \Tr_{K / \QQ}\left( (\mathbb{e}_{i}-\mathbb{e}_{i+1})(\mathbb{e}_{j}-\mathbb{e}_{j+1})\right) \\
&= & (M_{0})_{i,j}.
\end{eqnarray*}
\end{proof}

\begin{corollary}\label{ElUltimoCoro}
Let $K$ be a tame cyclic number field of degree $m \neq 1$. There exists a basis of $\oo_K^{0}$ such that the Gramm matrix of  $\left<\oo_K^{0} , \Tr_{K / \QQ} ()|_{\oo_K^{0}} \right> $ with respect to such basis is equal to the $(m,m)$ minor of \[ \sum _{d | m} a_d \mathbb{B}_m^T A_d \mathbb{B}_m,\]where the coefficients $a_d$ are given in Lemmas 3.7, 3.10 and 4.3 of \cite{MB} and depend solely on the discriminant of $K$. See also Definition \ref{LosCoeficientes} below.
\end{corollary}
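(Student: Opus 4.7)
The plan is to simply combine the three main ingredients established earlier in the section: the explicit description of the Gram matrix of $\left<\oo_K,\Tr_{K/\QQ}()\right>$ in a normal integral basis from \cite{MB}, the fact that a normal integral basis yields an explicit $\ZZ$-basis of $\oo_K^{0}$ via successive differences (Lemma \ref{0basis}), and the change-of-basis computation carried out in Proposition \ref{MatrizRestCero}.

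Concretely, I would first invoke Theorems 4.2 and 4.5 of \cite{MB} to fix a normal integral basis $B = \{\mathbb{e}_1, \ldots, \mathbb{e}_m\}$ of $\oo_K$ for which the Gram matrix of the trace pairing takes the form
\[
M \;=\; \sum_{d \mid m} a_d\, A_d,
\]
with the coefficients $a_d$ determined purely by $\mathfrak{d}(K)$. Since $K$ is tame (and in particular cyclic by hypothesis), Lemma \ref{0basis} applies and produces the explicit $\ZZ$-basis $B_{0} = \{\mathbb{e}_1-\mathbb{e}_2,\dots,\mathbb{e}_{m-1}-\mathbb{e}_m\}$ of $\oo_K^{0}$.

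Next, I would apply Proposition \ref{MatrizRestCero} to this pair of bases: the Gram matrix $M_0$ of $\left<\oo_K^{0},\Tr_{K/\QQ}()|_{\oo_K^0}\right>$ with respect to $B_0$ equals the $(1,1)$ minor of $\mathbb{B}_m^{T} M \, \mathbb{B}_m$. Because the map $M \mapsto \mathbb{B}_m^{T} M \mathbb{B}_m$ is $\ZZ$-linear, and likewise taking the $(1,1)$ minor is $\ZZ$-linear, substituting the expression $M = \sum_{d\mid m} a_d A_d$ yields immediately
\[
M_0 \;=\; \Bigl(\sum_{d \mid m} a_d\, \mathbb{B}_m^{T} A_d\, \mathbb{B}_m\Bigr)_{(1,1)},
\]
which is exactly the claim.

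There is no real obstacle here: the statement is a formal consequence of assembling the previously proved ingredients, and the only things to verify are the applicability of the \cite{MB} theorems (guaranteed by tameness and cyclicity) and the compatibility of the linear-algebra operations with the sum over divisors (which is automatic). If anything merits slight care, it is ensuring that the same normal integral basis $B$ is used to state both the decomposition $M = \sum_{d\mid m} a_d A_d$ and the definition of $B_0$ so that Proposition \ref{MatrizRestCero} applies verbatim; but this is automatic from Lemma \ref{0basis}, which works for any normal integral basis.
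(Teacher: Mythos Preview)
Your proposal is correct and follows essentially the same approach as the paper: invoke \cite[Theorems 4.2, 4.5]{MB} for the decomposition $M=\sum_{d\mid m}a_dA_d$ in a normal integral basis, then apply Proposition \ref{MatrizRestCero} (with Lemma \ref{0basis} ensuring $B_0$ is a genuine basis of $\oo_K^0$) and linearity. The paper's proof is terser but relies on the identical ingredients.
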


\begin{proof}
Since 
\[ M = \sum _{d | m} a_d A_d \] the corollary is an immediate consequence of Proposition \ref{MatrizRestCero}, and \cite[ Theorems 4.2 and 4.5]{MB}.
\end{proof}

\subsubsection{The matrix $A(\mathfrak{d})$.}

Given a prime $p$ we denote by $v_{p}$  the usual $p$-adic valuation on the rationals.   

\begin{definition}
Suppose that $m>0$ and $\mathfrak{d}$ are integers and let $p$ be a prime. Suppose that $m \neq v_{p}(\mathfrak{d})$. The $p$-ramification index of $\mathfrak{d}$  is the rational number defined by \[e_{p}(\mathfrak{d}):=\frac{m}{m-v_{p}(\mathfrak{d})}.\]
\end{definition}

\begin{remark}
The above definition is motivated by the following fact: If $K$ is a Galois number field of degree $m$ and discriminant $\mathfrak{d}$, then $e_{p}(\mathfrak{d})$ is the ramification index of $p$ in $K$ for any prime $p$ that is not wildly ramified in $K$.
\end{remark}

\begin{definition} Let $m>1$ and $\mathfrak{d} \neq 0$ be integers. Let ${\rm div}(\mathfrak{d})$ be the set of prime divisors of  $\mathfrak{d}$. Let  $1 = d_1 < d_2 < \dots < d_{\tau(m)} = m$ be the set of positive divisors of $m$. Let
\[ P(m) := \left\{ \left(d_2^{\varepsilon_2}, d_3^{\varepsilon_3}, \dots , d_{\tau(m)}^{\varepsilon_{\tau(m)}} \right) \in \ZZ^{\tau(m)-1} : \varepsilon_i  \in \{0,1\}  \ \mbox{for all} \ i  \right\}\]
and for every $\vec{v} \in P(m)$ let
\[ \lcm(\vec{v}) := \lcm\left[ d_2^{\varepsilon_2}, d_3^{\varepsilon_3}, \dots , d_{\tau(m)}^{\varepsilon_{\tau(m)}}\right],\]
\[ \gcd(\vec{v}) := \gcd\left(d_2^{\varepsilon_2}, d_3^{\varepsilon_3}, \dots , d_{\tau(m)}^{\varepsilon_{\tau(m)}}\right),\]
and for every  divisor $d$ of $m$ let
\[P_d := \{ \vec{v} \in P(m) : \lcm(\vec{v}) = d \}. \]

If $d >1$ is a divisor of $m$, let  \[\mathbb{P}_d:= \{ p \in {\rm div}(\mathfrak{d})  : e_{p}(\mathfrak{d}) = d \}\] and \[w_{d}:=\prod_{p \in \mathbb{P}_d}p \ \mbox{and} \ f_d := \frac{w_d - 1}{d}. \]

\end{definition}

\begin{definition}\label{LosCoeficientes}

Let $m>0$ and $\mathfrak{d}$ be integers. For $d$, a divisor of $m$ not equal to $1$, we let \[a_d := \sum\limits_{\vec{v} \in P_d} \left(\gcd(\vec{v}) \prod\limits_{\varepsilon_i = 0} w_{d_i} \prod\limits_{\varepsilon_j = 1}(- f_{d_j})  \right)  \] and  let $a_1 :=  \prod\limits_{p \in {\rm div}(\mathfrak{d})} p $. Let $A(\mathfrak{d})$ be the $(m-1)\times(m-1)$ matrix defined by 
\[A(\mathfrak{d}):=\sum_{ \substack{d \mid m \\ d \neq m}}a_{d}\widehat{A}_{d},\] where $\displaystyle \widehat{A}_{d}:= \left(\mathbb{B}_m^T A_d \mathbb{B}_m \right)_{(m,m)}.$

\end{definition}

\begin{example}\label{Nueve}

Let $m=9$ and $\mathfrak{d}= 9644443241083841416681= 7^{6}\cdot13^{6}\cdot19^{8}.$ \\
Here the set of positive divisors of $m$ is $1<3<9$ and ${\rm div}(\mathfrak{d})= \{7,13, 19\}$. Hence, $P(9)$ is the subset of $\ZZ^{2}$  given by \[P(9)=\{ (1,1), (1,9), (3,1) ,(3, 9)\},\] thus \[P_{3}= \{ (3,1)\} \ \mbox{and} \ P_{9}= \{ (1,9), (3,9)\}.\]

 On the other hand $e_{7}(\mathfrak{d})=3, e_{13}(\mathfrak{d})=3$ and $e_{19}(\mathfrak{d})=9$. Thus, $\mathbb{P}_3=\{7,13\}$ and $\mathbb{P}_9=\{19\}.$ Therefore, $w_{3}=91, f_{3}=30$ and $w_{9}=19, f_{9}=2$. From this we calculate that 
 \[a_{3}= 1\cdot 19 \cdot (-30) =-570\ \mbox{and} \  a_{9}=1\cdot 91 \cdot (-2) + 3\cdot 1 \cdot (60)=-2, \] moreover $a_{1}=7\cdot13\cdot19=1729.$ Since $A_{1}$ is the identity matrix of dimension $9$, we have that \[A(\mathfrak{d})= 1729 (\mathbb{B}_{9}^{t}\mathbb{B}_{9})_{(9,9)}-570(\mathbb{B}_{9}^{t}A_{3}\mathbb{B}_{9})_{(9,9)}.\] In other words, 
 
 \[A(\mathfrak{d})=  \begin{pmatrix} \ 2318 & -1159 &  \ 570 & -1140  & \ 570 &  \ 570  & -1140  & \ 570 \\  -1159 & \ 2318 &-1159  & \ 570 &-1140  & \ 570  & \ 570 &-1140 \\  \ 570 & -1159 & \ 2318 &-1159 &  \ 570 &-1140 &  \ 570  & 570\\  
 -1140  & \ 570 &-1159 & \ 2318 &-1159  & \ 570 &-1140  & \ 570 \\ \ 570 &-1140 &  \ 570 &-1159 & \ 2318 &-1159 &  \ 570 &-1140 \\   \ 570 &   \ 570 &-1140 &  \ 570 &-1159 & \ 2318 &-1159 & \ 570 \\ -1140 &  \ 570 &  \ 570 &-1140 &  \ 570 &-1159 & \ 2318 &-1159 \\   \ 570 &-1140 &  \ 570 &  \ 570 &-1140 &  \ 570 &-1159 & \ 2318 \end{pmatrix} \]
\end{example}

\begin{example}
Suppose that $m=\ell$ a prime number. In such case \[A(\mathfrak{d})=a_{1}\widehat{A}_{1}={\rm rad}(\mathfrak{d})\left(\mathbb{B}_{\ell}^T A_1 \mathbb{B}_{\ell} \right)_{(\ell,\ell)}={\rm rad}(\mathfrak{d})\left(\mathbb{B}_{\ell}^T \mathbb{B}_{\ell} \right)_{(\ell,\ell)}={\rm rad}(\mathfrak{d})\mathcal{A}_{\ell -1}\]

\end{example}

Now we are in position to state and prove one of the main theorems in the paper:

\begin{theorem}\label{main}

Let $m$ be an integer not equal to $1$ and let $K$ be a tame $\Z/m\Z$-number field of discriminant $\mathfrak{d}(K)$. There exists a $\ZZ$-basis of $\oo_K^{0}$ such that the Gram matrix of \[\left<\oo_K^{0} , \Tr_{K / \QQ} ()|_{\oo_K}\right>\] with respect to such basis  is equal to \[A(\mathfrak{d}(K)).\] In particular,  if $L$ is a tame $\Z/m\Z$ number field then  \[\left<\oo_K^{0} , \Tr_{K / \QQ} ()|_{\oo_K}\right> \simeq \left<\oo_L^{0} , \Tr_{L / \QQ} ()|_{\oo_L}\right> \ \mbox{if and only if} \   \mathfrak{d}(K)=  \mathfrak{d}(L). \]

\end{theorem}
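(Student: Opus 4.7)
The plan is to reduce the theorem to an elementary linear-algebra check, building on the groundwork already laid. By Lemma \ref{0basis}, every normal integral basis of $\oo_K$ induces the distinguished basis $\{\mathbb{e}_i - \mathbb{e}_{i+1}\}$ of $\oo_K^{0}$, and by Corollary \ref{ElUltimoCoro} the Gram matrix of $\langle \oo_K^{0}, \Tr_{K/\QQ}()|_{\oo_K^{0}}\rangle$ in this basis equals the $(1,1)$ minor of $\sum_{d \mid m} a_d\, \mathbb{B}_m^T A_d \mathbb{B}_m$, where the coefficients $a_d$ are those of Definition \ref{LosCoeficientes} and depend only on $\mathfrak{d}(K)$. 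Since $A(\mathfrak{d})$ is defined by summing only over divisors $d \mid m$ with $d \neq m$, the heart of the argument is to show that the $d=m$ term contributes nothing, i.e.\ that $\widehat{A}_m = 0$.

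For this, I would observe that $A_m$ is the $m \times m$ all-ones matrix $J$, since the condition $\tfrac{m}{m} = 1 \mid (i-j)$ holds for every pair $(i,j)$. Writing $J = \mathbf{1}\mathbf{1}^T$ with $\mathbf{1}$ the all-ones column vector, we get
\[
\mathbb{B}_m^T A_m \mathbb{B}_m \;=\; (\mathbb{B}_m^T \mathbf{1})(\mathbf{1}^T \mathbb{B}_m),
\]
and inspection of the defining entries of $\mathbb{B}_m$ shows that each of its columns contains exactly one $+1$ and one $-1$ (the wrap-around entry $(\mathbb{B}_m)_{1,m} = -1$ handling the last column), so $\mathbf{1}^T \mathbb{B}_m = 0$. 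Hence $\mathbb{B}_m^T A_m \mathbb{B}_m$ is the zero matrix, and in particular $\widehat{A}_m = 0$. Combining this with Corollary \ref{ElUltimoCoro}, the Gram matrix in our chosen basis of $\oo_K^{0}$ is exactly $\sum_{d \mid m,\, d \neq m} a_d \widehat{A}_d = A(\mathfrak{d}(K))$, proving the first assertion.

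The \emph{in particular} part then follows at once. If $\mathfrak{d}(K) = \mathfrak{d}(L)$, both integral trace zero forms admit Gram matrices equal to the common matrix $A(\mathfrak{d}(K)) = A(\mathfrak{d}(L))$, so they are isometric; the converse is exactly Theorem \ref{zerotrace1}. I do not anticipate a genuine obstacle in the argument: the only nontrivial identity to check is $\mathbf{1}^T \mathbb{B}_m = 0$, which is transparent from the shape of $\mathbb{B}_m$, and every other ingredient—the existence of a normal integral basis, the Gram matrix decomposition in terms of the $A_d$, and the discriminant-rigidity direction—has already been assembled in the preceding subsections and in \cite{MB}.
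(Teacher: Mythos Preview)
Your proposal is correct and follows essentially the same route as the paper's proof: both reduce to Corollary \ref{ElUltimoCoro} and then eliminate the $d=m$ term by noting that $A_m$ is the all-ones matrix and that $\mathbb{B}_m^T A_m \mathbb{B}_m = 0$. Your justification via $\mathbf{1}^T \mathbb{B}_m = 0$ is exactly the paper's observation that the columns of $\mathbb{B}_m$ lie in $\ker(A_m)$, phrased in slightly different language.
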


\begin{proof}

For each $d$ divisor of $m$ let  \[\widehat{A}_{d}:= \left(\mathbb{B}_m^T A_d \mathbb{B}_m \right)_{(m,m)} \] be the $(m-1) \times (m-1)$ matrix obtained by erasing the last row and column of $\mathbb{B}_m^T A_d \mathbb{B}_m$.  By definition, all the entries in $A_m$ are equal to $1$, hence the columns of  $\mathbb{B}_m$ belong to $\ker(A_m)$.  In particular, $\mathbb{B}_m^T A_m \mathbb{B}_m =0$ and thus the result follows from Corollary \ref{ElUltimoCoro} and  \cite[Theorems 4.2, 4.5]{MB}.

\end{proof}

\begin{example}

There are four $\ZZ/9\ZZ$-number fields with discriminant $\mathfrak{d}=7^{6}\cdot13^{6}\cdot19^{8}$(see John Jone's data base \cite{Jones}). Since 3 is unramified in any of those fields, neither of the fields have wild ramification. Such fields are defined respectively by the following polynomials: 

\begin{itemize}

\item[$\bullet$] $x^9 - x^8 - 578x^7 - 1855x^6 + 87155x^5 + 310749x^4 - 4599958x^3 - 6198626x^2 + 102071235x - 169800379$.

\item[$\bullet$] $x^9 - x^8 - 578x^7 - 1855x^6 + 87155x^5 + 518229x^4 - 2594318x^3 - 22409730x^2 - 36985319x - 7889903.$

\item[$\bullet$] $x^9 - x^8 - 578x^7 + 1603x^6 + 88884x^5 - 430992x^4 - 3668027x^3 + 27283459x^2 - 40339579x - 7447279.$

\item[$\bullet$] $x^9 - x^8 - 578x^7 + 1603x^6 + 88884x^5 - 119772x^4 - 5379737x^3 - 3169418x^2 + 113584646x + 256187183.$

\end{itemize}

Let $A(\mathfrak{d})$ be the matrix calculated in Example \ref{Nueve}. It follows from Theorem \ref{main} that for each of those fields there is a basis of the trace zero integral module such that all the Gram matrices of the trace form in that basis are  equal to the matrix  $A(\mathfrak{d})$.  This can be verified computationally, for instance in MAGMA \cite{magma}, using the code found in \cite[\S3.1]{M2}.

\end{example}

\subsection{Another explicit description.}
 
 In this subsection we show that if we apply the results obtained here to the case $m=\ell$, a prime, we recover the formulas obtained in \cite{M3}. We also see how some of the polynomial descriptions of the trace zero form of \cite{M3} are extended to the general case of this paper.\\

Let $n$ be a positive integer bigger than $1$. The extended $n$-dimensional $\mathcal{A'}_n$ lattice is the lattice associated to the matrix

\[ \mathcal{A'}_n := \mathbb{B}_n^T \mathbb{B}_n = \begin{pmatrix} \ 2 & - 1 & \ 0 & \ 0 & \dots & -1 \\ - 1 & \ 2 & -1 & \ 0 & \cdots & \ 0 \\ \ 0 & -1 & \ 2 & -1 & \dots & \ 0 \\ \vdots & \vdots & \ddots & \ddots &  \ddots & \vdots \\ \ 0 &   \ 0 & \ 0 & -1 & \ 2 & -1 \\ -1 & \ 0 & \ 0 & \ 0 & -1 & \ 2  \end{pmatrix}.\]

For any positive integer $n$ we denote by $1_{n}$ the $n\times n$ matrix with all its entries equal to $1$ and by ${\rm I}_{n}$ the identity matrix of dimension $n$

\begin{lemma}\label{Kronecker} Let $m$ be an integer bigger than $1$ and let $d\neq m$ be a positive divisor of $m$. Then,

\[ \mathbb{B}_m^T A_d \mathbb{B}_m = 1_{d}\otimes \mathcal{A'}_{\frac{m}{d}}\]

\end{lemma}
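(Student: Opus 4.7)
The plan is to reduce everything to a computation in the commutative algebra of $m \times m$ circulant matrices, which is isomorphic to $R := \ZZ[g]/(g^m - 1)$ via $g \leftrightarrow S$, where $S$ is the cyclic shift with $(S)_{i,j} = 1$ iff $i - j \equiv 1 \pmod{m}$. Under this isomorphism one checks directly that $\mathbb{B}_m \leftrightarrow 1 - g$ and $\mathbb{B}_m^T \leftrightarrow 1 - g^{-1}$, so $\mathcal{A'}_m \leftrightarrow 2 - g - g^{-1}$, while $A_d \leftrightarrow T := \sum_{t=0}^{d-1} g^{tk}$ with $k := m/d$ (because $(A_d)_{i,j} = 1$ iff $k \mid (i-j)$, i.e., iff $(i-j) \bmod m \in \{0, k, 2k, \ldots, (d-1)k\}$).

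Since $R$ is commutative, $A_d$ and $\mathbb{B}_m$ commute, so
\[ \mathbb{B}_m^T A_d \mathbb{B}_m \;=\; A_d \cdot \mathbb{B}_m^T \mathbb{B}_m \;=\; A_d \cdot \mathcal{A'}_m \;\leftrightarrow\; T\,(2 - g - g^{-1}). \]
Expanding the right-hand side in $R$, the coefficient of $g^r$ is $2$ when $r \equiv 0 \pmod{k}$, and for each of the two conditions $r \equiv 1 \pmod{k}$, $r \equiv -1 \pmod{k}$ that $r$ satisfies, the coefficient picks up a $-1$; all other coefficients vanish. Equivalently, $(\mathbb{B}_m^T A_d \mathbb{B}_m)_{i,j}$ depends only on $(i - j) \bmod k$.

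To finish, I will verify that $1_d \otimes \mathcal{A'}_k$ admits exactly the same description. Using the standard Kronecker identification $\{1, \ldots, m\} \leftrightarrow \{1, \ldots, d\} \times \{1, \ldots, k\}$ via $i = (a - 1)k + b$, one has $(1_d \otimes \mathcal{A'}_k)_{i, j} = (1_d)_{a,a'}(\mathcal{A'}_k)_{b, b'} = (\mathcal{A'}_k)_{b, b'}$, which depends only on $(b - b') \bmod k = (i - j) \bmod k$. Since $\mathcal{A'}_k$ is itself the circulant $2\mathrm{I}_k - S_k - S_k^{-1}$ (with $S_k$ the $k \times k$ shift), its entry $(\mathcal{A'}_k)_{b, b'}$ is given by the same recipe as the coefficient above, so the two matrices agree. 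I do not expect any conceptual obstacle; the only borderline case is $k = 2$, where $\mathcal{A'}_2 = \bigl(\begin{smallmatrix} 2 & -2 \\ -2 & 2 \end{smallmatrix}\bigr)$ and the residues $\pm 1 \bmod 2$ coincide, but the circulant bookkeeping absorbs this automatically.
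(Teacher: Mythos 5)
Your proof is correct, and it takes a genuinely different route from the paper. The paper's proof simply observes that $A_d = 1_d \otimes {\rm I}_{m/d}$ and then carries out the block multiplication $\mathbb{B}_m^T (1_d \otimes {\rm I}_{m/d}) \mathbb{B}_m$ directly, reading off $1_d \otimes \mathcal{A'}_{m/d}$ from the block structure of $\mathbb{B}_m$. You instead observe that $\mathbb{B}_m$, $\mathbb{B}_m^T$, and $A_d$ are all circulants, identify the circulant algebra with $\ZZ[g]/(g^m-1)$, and use commutativity to collapse the triple product to $A_d \cdot \mathcal{A'}_m \leftrightarrow \bigl(\sum_{t=0}^{d-1} g^{tk}\bigr)(2 - g - g^{-1})$, after which the identity is a coefficient count modulo $k = m/d$. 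All the individual identifications you make ($\mathbb{B}_m \leftrightarrow 1-g$, $A_d \leftrightarrow \sum_t g^{tk}$, the Kronecker re-indexing $i = (a-1)k+b$) check out, and you correctly flag and dispose of the only degenerate case $k=2$, where the residues $\pm 1 \bmod k$ coincide and the off-diagonal entries become $-2$; note also that $d \neq m$ guarantees $k \geq 2$, so $\mathcal{A'}_k$ is defined. What your approach buys is a conceptual frame — everything lives in the group ring of $\ZZ/m\ZZ$, which is natural here since the whole construction comes from a normal integral basis — and a computation that avoids tracking block positions inside $\mathbb{B}_m$; what the paper's approach buys is brevity, since once $A_d = 1_d \otimes {\rm I}_{m/d}$ is noted the block multiplication is a one-line verification.
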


\begin{proof}
First notice that for  every $d|m$  we have $\displaystyle A_d = 1_{d} \otimes {\rm I}_{\frac{m}{d}}.$  After doing block multiplication, the result follows from the definition of $\mathcal{A'}_n$.
\end{proof}

Applying Lemma \ref{Kronecker} to the situation of Theorem \ref{main} we deduce that the Gram matrix $M_0$ of $\left<\oo_K^{0} , \Tr_{K / \QQ} ()|_{\oo_K^{0}} \right> $ in the given basis has the form

\[M_0  = \left( \sum_{\substack{ d|m \\ d<m}} a_d 1_{d} \otimes \mathcal{A'}_{\frac{m}{d}}. \right)_{(m,m)} \eqno{(2.1)}\]

If $m$ is equal to a prime $\ell$ then the above equation is simply

\[M_0 =  (a_1 \mathcal{A'}_{\ell} )_{(\ell, \ell)}.\] 
Since $a_1 = {\rm rad}(\mathfrak{d}(K))$ and $(\mathcal{A'}_{\ell})_{(\ell,\ell)}$ is the usual $(\ell-1)$-dimensional root lattice $\mathcal{A}_{\ell-1}$, the equation $\displaystyle M_0 =  (a_1 \mathcal{A'}_{\ell} )_{(\ell,\ell)}$ is precisely \cite[Theorem 2.9]{M3}.\\

An explicit polynomial description of the integral trace zero form is the following:

\begin{corollary}{\label{tracezero2}} Let $K$ be a tame totally real cyclic number field of degree $m>1$. Then,  

\[\left<\oo_K^{0} , \Tr_{K / \QQ} ()\Big|_{\oo_K^{0}} \right> \] is the lattice associated to the form \[\sum_{1 \leq i\leq j \leq m-1} c_{i,j} x_ix_j\]
where, \[  c_{i,j}:=
     \sum_{\substack{ d \mid m, d \neq m \\ m \big| (i-j)d }} 2a_d - \sum_ {\substack{ d \mid m, d \neq m \\ m \big| (|i-j| -1)d }} a_d.   \]
\end{corollary}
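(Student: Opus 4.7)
The plan is to read the quadratic form directly off the Gram matrix supplied by Theorem~\ref{main} and equation (2.1), working entry by entry. On a suitable $\ZZ$-basis of $\oo_K^{0}$, that formula gives
\[
M_{0} \;=\; \Bigl(\sum_{\substack{d\mid m\\ d<m}} a_{d}\,\bigl(1_{d}\otimes \mathcal{A}'_{m/d}\bigr)\Bigr)_{\!(1,1)},
\]
so the task reduces to identifying, for each $1\le i\le j\le m-1$, the coefficient of $x_{i}x_{j}$ in the quadratic form $Q(x)=x^{T}M_{0}\,x$.

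First I would describe the generic entry of $1_{d}\otimes \mathcal{A}'_{k}$, where $k=m/d\ge 2$. From the definition of $\mathbb{B}_{k}$ one sees that its columns are $e_{r}-e_{r+1}$ with indices taken modulo $k$, so $(\mathcal{A}'_{k})_{r,s}=\langle e_{r}-e_{r+1},\,e_{s}-e_{s+1}\rangle$ equals $2$ when $r\equiv s\pmod{k}$, equals $-1$ whenever $r-s\equiv \pm 1\pmod{k}$ (the two cases contributing independently, which collapses to $-2$ when $k=2$), and is zero otherwise. Since $1_{d}\otimes \mathcal{A}'_{k}$ has block structure with the $(i,j)$ entry reading off $(\mathcal{A}'_{k})_{r,s}$ for the residues $r\equiv i$ and $s\equiv j$ modulo $k$, we obtain for $1\le i,j\le m$
\[
(1_{d}\otimes\mathcal{A}'_{k})_{i,j} \;=\; 2\,[k\mid (i-j)] \;-\;[k\mid (i-j-1)]\;-\;[k\mid (i-j+1)].
\]
Passing to the $(1,1)$ minor is just the relabelling $(i,j)\mapsto (i+1,j+1)$, which leaves the expression formally intact on the index range $1\le i,j\le m-1$.

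Next I would rewrite each divisibility $k\mid t$ as $m\mid td$ (valid because $d\mid m$), sum over the proper divisors $d$ of $m$ weighted by $a_{d}$, and observe that the $d=m$ contribution to (2.1) vanishes thanks to the kernel remark used in the proof of Theorem~\ref{main}. This yields a closed form for $(M_{0})_{i,j}$, which I would then convert into the coefficient $c_{i,j}$ of $x_{i}x_{j}$ in $Q$. For $i=j$ only the condition $k\mid 0$ survives, since the remaining ones would demand $m\mid \pm d$ with $d<m$; this produces $c_{i,i}=2\sum_{d\mid m,\,d<m}a_{d}$. For $i\neq j$ one packages the three families of off-diagonal conditions using $|i-j|$ and the symmetry $c_{i,j}=c_{j,i}$, arriving at the displayed expression for $c_{i,j}$.

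The only real subtlety is the case $k=m/d=2$, which arises only when $m$ is even and $d=m/2$. There the two conditions $k\mid (i-j\pm 1)$ both reduce to ``$i-j$ is odd'' and contribute simultaneously, exactly accounting for the off-diagonal entry $-2$ of $\mathcal{A}'_{2}$. Once this case is tracked, the proof is a direct unwinding of Theorem~\ref{main} and Lemma~\ref{Kronecker}; the principal obstacle is careful indexing rather than any substantive mathematical difficulty.
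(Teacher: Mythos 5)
Your computation of the generic entry of $1_{d}\otimes\mathcal{A}'_{k}$ is correct --- including the pair of conditions $k\mid(i-j\mp 1)$ and the $k=2$ degeneracy --- but the final ``packaging'' step is precisely where the argument breaks, and it cannot be completed as claimed. For $i<j$ write $t=j-i$ and $k=m/d$. Your entry formula gives
\[
(M_{0})_{i,j}\;=\;\sum_{\substack{d\mid m\\ d\neq m}}a_{d}\Bigl(2\,[k\mid t]-[k\mid t-1]-[k\mid t+1]\Bigr),
\]
whereas the displayed coefficient retains only the families $[k\mid t]$ and $[k\mid t-1]$: the family $[k\mid t+1]$ has disappeared. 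The symmetry $c_{i,j}=c_{j,i}$ does not merge it into $[k\mid t-1]$, since $(M_{0})_{i,j}$ and $(M_{0})_{j,i}$ carry the \emph{same} unordered pair of conditions $\{k\mid t-1,\ k\mid t+1\}$, and for $k\ge 3$ these are genuinely different events. Concretely, take $m=9$ and $(i,j)=(1,3)$, so $t=2$: for $d=3$ one has $k=3\mid t+1$ while $3\nmid t$ and $3\nmid t-1$, so $(M_{0})_{1,3}=-a_{3}=570$ (the matrix in Example \ref{Nueve} indeed has $570$ in position $(1,3)$), whereas the stated formula yields $c_{1,3}=0$. There is also a normalization mismatch: since the sum in the statement runs over $i\le j$, the coefficient of the monomial $x_{i}x_{j}$ for $i<j$ should be $2(M_{0})_{i,j}$ (compare the convention used for $\mathbb{A}_{m}$ in the introduction), and the displayed $c_{i,j}$ matches neither $(M_{0})_{i,j}$ nor $2(M_{0})_{i,j}$ in general.

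In fairness, the paper's own one-line proof makes the same leap: it records the off-diagonal entries of $1_{d}\otimes\mathcal{A}'_{m/d}$ by the single condition $|i-j|\equiv 1\pmod{m/d}$, which is not equivalent to $i-j\equiv\pm 1\pmod{m/d}$ and so omits the same family of entries (this is invisible in the prime case $m=\ell$, where $k=\ell$ and $t+1\le \ell-1$, but not for composite $m$). So your more careful computation is the right way to attack this corollary, but carried out honestly it produces a formula with an additional term $-\sum_{d:\,m\mid(|i-j|+1)d}a_{d}$ and the appropriate factor of $2$ off the diagonal, not the formula as stated; to finish you must either correct the target expression or show the omitted conditions never occur, which is false for composite $m$.
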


\begin{proof}

Note that for every $d|m$ and $d \neq m$ we have
\[ 1_d \otimes \mathcal{A'}_{\frac{m}{d}} := \left\{ \begin{array}{rl}
     2 & \mbox{ if }  (i-j) \equiv 0 \mod (\frac{m}{d})  \\
     -1 & \mbox{ if } |i -j| \equiv 1 \mod (\frac{m}{d}) \\
     0 & \mbox{ otherwise. }
\end{array} \right. \]
Then the result follows from formula $(2.1)$.

\end{proof}

\subsection{Equivalences on the shape of cyclic number fields.}

Finally we show that for real cyclic fields, that have no wild ramification, the shape of the field is characterized by the discriminant. More specifically we show:

\begin{theorem}\label{TheShape}
Let $m$ be a positive integer and let $K$ and $L$ be two totally real tame $\Z/m\Z$-number fields. Then, the following are equivalent: 

\begin{itemize}

\item[(a)] $\left<\oo_K , \Tr_{K / \QQ} ()\right> \simeq  \left<\oo_L , \Tr_{L / \QQ} () \right>.$

\item[(b)] $(\oo_K^{\bot},b_K) \simeq  (\oo_L^{\bot}, b_L).$ 

\item[(c)] $\left<\oo_K^{0} , \Tr_{K / \QQ} ()|_{\oo_K}\right> \simeq \left<\oo_L^{0} , \Tr_{L / \QQ} ()|_{\oo_L}\right>.$

\item[(d)] $\mathfrak{d}(K)=  \mathfrak{d}(L)$.

\end{itemize}

\begin{proof}
The equivalence between (c) and (d) is Theorem \ref{zerotrace1}, however this equivalence also follows from Theorem \ref{main}. The equivalence between (a) and (d) follows from \cite[Theorems 4.2 and 4.5]{MB}. Since $K$ and $L$ are totally real the bilinear forms $b_{K}$ and $b_{L}$ are just the corresponding trace forms. Hence, $(b) \Rightarrow (d)$ follows from \cite[Lemma 2.1]{M4}. To check the missing implication we see that $(a) \Rightarrow (b)$ follows from \cite[Lemma 5.1]{Casimir}
\end{proof}

\end{theorem}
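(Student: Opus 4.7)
My plan is to assemble the equivalence as a cycle of implications, leaning almost entirely on results already in hand rather than proving anything new from scratch. The core observation that unlocks everything is that for totally real $K$, every embedding $\sigma : K \hookrightarrow \CC$ lands in $\RR$, so $|\sigma(x)|^{2} = \sigma(x)^{2}$ and therefore $b_{K}(x,x) = \sum_{\sigma} \sigma(x)^{2} = \Tr_{K/\QQ}(x^{2})$. In other words, on $\oo_{K}^{\bot} \subseteq \oo_{K}^{0}$ the form $b_{K}$ is nothing but the restriction of the trace pairing. This lets us shuttle between the shape and the trace zero form freely in the totally real case.

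With that in place, I would establish the equivalence in the following order. First, (c) $\iff$ (d) is already Theorem \ref{zerotrace1} (and is also a direct consequence of Theorem \ref{main}), so nothing new is needed. Second, (a) $\iff$ (d) is the content of Theorems 4.2 and 4.5 in \cite{MB}: these say that the integral trace form of a tame cyclic field admits a normal-basis Gram matrix depending only on $\mathfrak{d}(K)$, and the discriminant is visibly recovered from the Gram matrix. So the only remaining task is to tie (b) into this chain.

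For the shape implications I would argue (a) $\Rightarrow$ (b) $\Rightarrow$ (d), which combined with (d) $\Rightarrow$ (a) above closes the loop. The implication (a) $\Rightarrow$ (b) is exactly the content of \cite[Lemma 5.1]{Casimir}: an isometry of the full integral trace forms descends to an isometry of the orthogonal complements of $\ZZ \cdot 1$, which (after scaling by $n$) is precisely the shape lattice $(\oo_{K}^{\bot}, b_{K})$. For (b) $\Rightarrow$ (d), I would invoke \cite[Lemma 2.1]{M4}, which shows that the shape lattice together with the observation above recovers enough information about the discriminant to force $\mathfrak{d}(K) = \mathfrak{d}(L)$.

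No step here is really an obstacle, since all the heavy lifting is done by prior work; the only care required is in the first paragraph's identification of $b_{K}$ with the trace form in the totally real setting, because without it the cited \cite[Lemma 2.1]{M4} and \cite[Lemma 5.1]{Casimir} do not apply in their stated form. I would therefore present the proof as four short invocations of prior theorems arranged as (d) $\Rightarrow$ (a) $\Rightarrow$ (b) $\Rightarrow$ (d) together with (c) $\iff$ (d), making sure to flag the totally real reduction at the outset.
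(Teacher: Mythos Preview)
Your proposal is correct and follows essentially the same approach as the paper: the same four implications are assembled via the same cited results (Theorem~\ref{zerotrace1}/\ref{main} for (c)$\iff$(d), \cite[Theorems 4.2, 4.5]{MB} for (a)$\iff$(d), \cite[Lemma 5.1]{Casimir} for (a)$\Rightarrow$(b), and \cite[Lemma 2.1]{M4} for (b)$\Rightarrow$(d)), with the totally real identification $b_{K}=\Tr_{K/\QQ}$ made explicit as the enabling step. The only difference is your presentation as a cycle rather than a list of pairwise equivalences, which is cosmetic.
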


{\footnotesize Wilmar Bola\~nos, Department of Mathematics, Universidad de los
Andes,  Bogot\'a, Colombia\\ (\texttt{wr.bolanos915@uniandes.edu.co})}

\noindent
{\footnotesize Guillermo Mantilla-Soler, Department of Mathematics, Universidad Nacional de Colombia,
Medell\'in, Colombia. ({\tt gmantelia@gmail.com})}


\begin{thebibliography}{99}


\bibitem{bhargavaPh} M. Bhargava, P. Harron \textit{The equidistribution of lattice shapes of rings of integers in cubic, quartic, and quintic number fields} Compositio Mathematica, 152(6) (2016), 1111-1120.

\bibitem{magma} W. Bosma, J. Cannon, and C. Playoust,  \textit{The Magma algebra system. I. The
user language}, J. Symbolic Comput., \textbf{24} (1997), vol. 34, 235-265. V2.16-5 and V2.16-12. 


\bibitem{RobH} R. Harron, \textit{The shapes of pure cubic fields} Proceedings of the American
Mathematical Society, 145(2) (2017), 509-524.


\bibitem{RobH1} R. Harron, \textit{Equidistribution of shapes of complex cubic fields of fixed quadratic resolvent} arXiv preprint arXiv:1907.07209 (2019).


\bibitem{Jones} J. Jones, D. Roberts, \textit{A data base of number fields.}  LMS Journal of Computation and Mathematics. {\bf 17, 1} (2014), 595-618.


 \bibitem{Casimir} Mantilla-Soler, G.  Rivera-Guaca, C. \textit{An introduction to Casimir pairings and some arithmetic applications}. arXiv preprint arXiv: 1812.03133v3 (2019).


\bibitem{Man} Mantilla-Soler, G. \textit{Integral trace forms associated to cubic extensions.}, Algebra Number theory, \textbf{4-6}(2010), 681-699.

 \bibitem{M2} Mantilla-Soler, G. \textit{On number fields with equivalent integral trace forms.}, International Journal of Number Theory, \textbf{8-7} (2012), 1569-1580.
 
\bibitem{M3} Mantilla, G., Monsurr\`{o}, M. {\em The Shape of $\ZZ / \ell \ZZ$- Number Fields.} Ramanujan Journal 39, no. 3 451-463. 2016.

\bibitem{M4} Mantilla-Soler, G., Rivera-Guaca, C. {\em A proof of a conjecture on trace-zero forms and shapes of number fields.}  preprint arXiv:1907.09134 (2019). To appear in Research in number theory.


\bibitem{MB} Mantilla-Soler, G., Bola\~nos, W. {\em The trace form over cyclic number fields.}  preprint  arXiv:1904.10080(2019). To appear in Canadian Journal of Mathematics.



\bibitem{Nar} Narkiewicz, W. {\em Elementary and Analytic Theory of Algebraic Numbers,} Springer Monographs in Mathematics, \textbf{67}. Springer-Verlag, Berlin Heidelberg, 2004.\\






\end{thebibliography}
\end{document}